\documentclass[reqno]{amsart}

\usepackage{amsfonts}
\usepackage{amssymb}

\usepackage{amscd}
\usepackage{pictexwd,dcpic}
\usepackage{graphicx}
\usepackage{tikz}
\usepackage{fullpage}
\usepackage{hyperref}
\hypersetup{
    colorlinks=true,
    linkcolor=blue,
    filecolor=magenta,
    urlcolor=cyan,
    citecolor=cyan,}
\usepackage[nameinlink,capitalize]{cleveref}
\usepackage{enumitem}

\title{Translational surfaces and iterated resultants}

\author{Matthew Weaver}
\address{School of Mathematical and Statistical Sciences, Arizona State University, Wexler Hall, Tempe AZ 85281}
\email{matthew.j.weaver@asu.edu}

\date{}

\newtheorem{thm}{Theorem}[section]
\newtheorem*{thm-nonum}{Theorem}
\newtheorem{prop}[thm]{Proposition}
\newtheorem{lemma}[thm]{Lemma}
\newtheorem{cor}[thm]{Corollary}
\numberwithin{equation}{section}

\theoremstyle{definition}
\newtheorem{rem}[thm]{Remark}
\newtheorem{set}[thm]{Setting}
\newtheorem{notat}[thm]{Notation}
\newtheorem{defn}[thm]{Definition}
\newtheorem{quest}[thm]{Question}
\newtheorem{ex}[thm]{Example}
\newtheorem{obs}[thm]{Observation}

\Crefname{thm}{Theorem}{Theorems}
\Crefname{ex}{Example}{Examples}

\def\P{\mathbb{P}}

\def\a{{\bf a}}
\def\b{{\bf b}}
\def\c{{\bf c}}

\def\A{{\bf A}}
\def\B{{\bf B}}
\def\C{{\bf C}}

\def\f{{\bf f}}
\def\g{{\bf g}}
\def\h{{\bf h}}
\def\K{\mathbb{K}}

\def\x{{\bf x}}

\def\dim{\mathop{\rm dim}}

\def\quot{\mathop{\rm Quot}}
\def\rk{\mathop{\rm rank}}

\def\bideg{\mathop{\rm bideg}}
\def\syz{\mathop{\rm syz}}

\def\Res{\mathrm{Res}}
\def\Syl{\mathrm{Syl}}

\usepackage[all]{xy}

\usepackage{nicematrix}

\begin{document}

\begin{abstract}
A translational surface is a tensor product surface constructed from two space curves by translating one along the other. These surfaces are common within geometric modeling and, since their description is parametric, it is desirable to obtain the implicit equation of such a surface. These surfaces have been studied thoroughly by Goldman and Wang in \cite{GW18syz}, where a particular set of syzygies was identified and shown to yield the implicit equation through an inhomogeneous resultant. As the method of \cite{GW18syz} may fail in the presence of ill-behaved basepoints of the parameterization, we offer an alternative method in this article using iterated homogeneous resultants. The algorithm presented here involves smaller Sylvester matrices overall, potentially resulting in faster computation, and succeeds in many instances where the method of \cite{GW18syz} cannot be applied.
\end{abstract}

\maketitle


\section{Introduction}

A classical problem within algebraic geometry is to find the implicit equations of a variety defined as the image of a rational map. This so-called \textit{implicitization problem} has gained attention in recent years for its applications to computer-aided design (CAD) in low-dimensional settings, when such a map defines a curve or surface. In the present article, we consider the implicitization of \textit{translational surfaces} defined by translating a rational space curve along another such curve. As these are some of the basic modeling surfaces used within geometric modeling (see e.g. \cite{Liu99,LV17,PDS20}), one is particularly interested in efficient and computationally effective ways to implicitize these surfaces.

One of the notable properties of translational surfaces within geometric modeling is their ease of control. Since such a surface is constructed from rational space curves $\f^*(s)$ and $\g^*(t)$, they admit parameterizations in terms of the rational functions defining these curves. Whereas several variations of translational surfaces have been studied within the literature, a natural definition is given in \cite{LV17} as $\h^*(s,t) = \frac{\f^*(s) + \g^*(t)}{2}$. With this formulation, the surface parameterized by $\h^*(s,t)$ is translation invariant, i.e. translating the curves $\f^*(s)$ and $\g^*(t)$ by a vector translates the surface by this same vector. Similarly, the surface parameterized by $\h^*(s,t)$ is invariant under a variety of rigid motions, hence one is able to manipulate the surface $\h^*(s,t)$ simply by adjusting its generating curves.

While this setting is amenable for most applications, it is often more convenient to consider \textit{homogeneous} parameterizations, so as to apply tools from commutative algebra and algebraic geometry. Write
$$\f^* = \left( \frac{f_1}{f_0},\frac{f_2}{f_0},\frac{f_3}{f_0} \right)\quad\quad\text{and}\quad\quad \g^* = \left( \frac{g_1}{g_0},\frac{g_2}{g_0},\frac{g_3}{g_0} \right)$$
to denote rational space curves as before, for polynomials  $f_0,f_1,f_2,f_3  \in \K[s]$ and $g_0,g_1,g_2,g_3\in \K[t]$ where $\K$ is an algebraically closed field of characteristic zero. Homogenizing, we consider surfaces generated by the projective space curves
$\f\,: \P^1\dashrightarrow \P^3$ and $\g\,: \P^1\dashrightarrow \P^3$ defined by homogeneous polynomials $f_0,f_1,f_2,f_3 \in \K[s,u]$ and $g_0,g_1,g_2,g_3 \in \K[t,v]$. With these homogenizations and the definition of $\h^*(s,t)$  as above, we obtain the \textit{tensor product surface} parameterization $\h: \P^1\times \P^1 \dashrightarrow \P^3$ given by
\begin{equation}\label{Trans Surf Defn}
    \h = (h_0,h_1,h_2,h_3) = (2f_0g_0, f_0g_1+f_1g_0, f_0g_2+f_2g_0,f_0g_3+f_3g_0)
\end{equation}
defined by \textit{bihomogeneous} polynomials in $R=\K[s,u,t,v]$. Indeed, setting $\bideg s,u =(1,0)$ and $\bideg t,v = (0,1)$, if $\deg f_i = m$ and $\deg g_i =n$, then $\h$ is a tensor product surface parameterized in bidegree $(m,n)$.

As noted, the image $X_\h$ of the bihomogeneous parameterization $\h: \P^1\times \P^1 \dashrightarrow \P^3$ is called a tensor product surface in $\P^3$. These surfaces have been studied to great length, and a variety of techniques have been used to produce their implicit equations 
(see e.g. \cite{AHW05,Botbol11,DS16,SSV14,SG17,SG18,Weaver25}). Within recent years, the most effective implicitization tools have emerged from the study of \textit{Rees algebras} and \textit{syzygies}. In particular, relating these techniques to the notion of \textit{moving curves} and \textit{moving surfaces} following a parameterization has become a fruitful approach to the problem \cite{Cox08,CGZ00,SC95,SGD97,SSQK94}. The difficulty for such a parameterization $\h$ however, is that its syzygy module typically has many generators, increasing complexity. Unlike the case for projective curves, $\syz(\h)$ is generally far from free, hence one often aims to identify a subset of distinguished syzygies sufficient to carry out the necessary computations.

For a translational surface $X_\h$ as above, an extensive study on the syzygies of $\h$ was carried out by Goldman and Wang in \cite{GW18syz}. Their study reveals a close connection between the syzygies of $\h$ and the syzygies of its generating curves $\f$ and $\g$, which are readily available as $\syz(\f)$ and $\syz(\g)$ are free. In particular, a distinguished set of syzygies is identified and used to compute the implicit equation, using the resultant of three moving planes following $\h$. However, this method is shown to fail in the presence of a particularly ill-behaved \textit{basepoint}, i.e. a point in $\P^1\times \P^1$ where the parameterization $\h$ is not defined. Moreover, for a surface parameterized in bidegree $(m,n)$, the inhomogeneous resultant of \cite{GW18syz} is the determinant of a $2mn\times 2mn$ matrix, which may be large if $\f$ and $\g$ are parameterized in high degree.

The objective of this paper is to provide an alternative to the method developed in \cite{GW18syz}, in order to find the implicit equation of a translational surface $X_\h$. We provide a method using iterations of the classical Sylvester resultant, in the homogeneous case, and analyzing its factors at each step. We note that similar processes have been previously employed \cite{BPF26,BM09,LM09,McCallum99}, however this strategy is quite amenable to the bigraded setting here, and naturally eliminates a set of indeterminates at each step. Whereas this method has its own limitations, the matrices involved are much smaller compared to those in \cite{GW18syz}, allowing for faster computation in many cases. Moreover, we show that the algorithm presented here may be further simplified, depending on how either of the generating curves embeds into $\P^3$. 

We briefly describe how this paper is organized. In \Cref{Prelim Section}, we recall the necessary background material from \cite{GW18syz} on translational surfaces, moving surfaces following their parameterizations, and characterizations of their basepoints. In \Cref{Iterated Resultant Section}, we present a method to implicitize a translational surface $X_\h$ by iterating the classical Sylvester resultant. We provide a criterion under which this algorithm produces the implicit equation and discuss sufficient conditions under which this criterion holds. Lastly, in \Cref{Plane curves and ruled surfaces section}, we consider natural geometric instances in which this criterion is satisfied automatically, depending on how either of the generating curves lies within $\P^3$. In particular, we show that this process can be made simpler if either $\f$ or $\g$ is a planar curve in $\P^3$, and that our method coincides with the method developed in \cite{GW18ruled} when $X_\h$ is a \textit{ruled} translational surface.


\section{Properties of translational surfaces}\label{Prelim Section}

We briefly review certain aspects of translational surfaces from the perspective of algebraic geometry, adopting all conventions and notation from \cite{GW18syz}. We recall some results and observations on the syzygies of $\h$, and the associated moving planes following this parameterization. Moreover, we discuss and describe the basepoints of $\h$.

\subsection{Translational surfaces and syzygies}

Write $\K$ to denote an algebraically closed field of characteristic zero. As before, a translational surface is a rational tensor product surface $\h:\, \P^1\times \P^1 \dashrightarrow \P^3$ constructed from two space curves $$\f:\, \P^1 \dashrightarrow \P^3 \quad\quad\text{and} \quad\quad\g:\, \P^1 \dashrightarrow \P^3.$$ 
  Assume that these maps are defined by homogeneous polynomials $f_0,f_1,f_2,f_3$ of degree $m$ in $\K[s,u]$ and homogeneous $g_0,g_1,g_2,g_3$ in $\K[t,v]$ of degree $n$, and we may assume that $\gcd(\f)=1$ and $\gcd(\g)=1$. The translational surface $\h:\, \P^1\times \P^1 \dashrightarrow \P^3$ generated by $\f$ and $\g$ is given by the polynomials $h_0,h_1,h_2,h_3 \in R=\K[s,u,t,v]$ where
$$h_i = f_0 g_i + f_ig_0,$$
hence we often write $\h = f_0 \g + g_0\f$.  Here $h_0,h_1,h_2,h_3$ are \textit{bihomogeneous} polynomials, where the bigrading on $R$ is given by $\bideg s, u =(1,0)$ and $\bideg t,v = (0,1)$. As such, with $\deg f_i = m$ and $\deg g_i =n$, we say that the image of $\h$ is a translational surface $X_\h$ parameterized in bidegree $(m,n)$.

As mentioned, we aim to deduce homological information on $\h$ and its syzygies. Rather than analyze the full syzygy module $\syz(\h)$, we aim to describe a smaller subset of syzygies, sufficient for our purposes. As noted in \cite{GW18syz,GW18ruled}, we may deduce such information from the syzygies of the generating curves $\f$ and $\g$. Consider the matrices
\begin{equation}\label{Mg and Ng definition}
M_{\g} = \begin{bmatrix}
    2g_0&g_1&g_2&g_3\\
    0&g_0&0&0\\
    0&0&g_0&0\\
    0&0&0&g_0
\end{bmatrix}\quad\quad\text{and}\quad\quad
N_{\g} = \begin{bmatrix}
    \frac{g_0}{2}&-\frac{g_1}{2}&-\frac{g_2}{2}&-\frac{g_3}{2}\\
    0&g_0&0&0\\
    0&0&g_0&0\\
    0&0&0&g_0
\end{bmatrix}
\end{equation}
and note that $\h = \f M_{\g}$. Moreover, since $M_\g N_\g = N_\g M_\g = g_0^2{\bf I}$, where ${\bf I}$ is the $4\times 4$ identity matrix, we have the following relation between the syzygies of a generating curve $\f$ and the syzygies of $\h$.

\begin{lemma}[{\cite[pg. 79]{GW18syz}}]\label{syz lemma}
With $\f$ and $\h$ above, we have the following.
\begin{enumerate}
    \item[(a)] If $\a\in \syz(\f)$, then $N_\g\a\in \syz(\h)$.

    \item[(b)] If ${\bf p}\in \syz(\h)$, then $M_\g{\bf p} \in \syz(\f)$.
\end{enumerate}
\end{lemma}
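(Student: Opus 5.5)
Both parts follow from the matrix identities $\h = \f M_\g$ and $M_\g N_\g = N_\g M_\g = g_0^2{\bf I}$ noted just before the statement. Throughout, a syzygy of a row vector $\f$ is a column vector $\a$ with $\f\a = 0$, and the polynomial entries live in $R=\K[s,u,t,v]$.

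The first step is to verify the two identities by direct multiplication. The first column of $\f M_\g$ is $2f_0g_0 = h_0$, and for $i\geq 1$ the $i$-th column is $f_0g_i + f_ig_0 = h_i$, so $\h = \f M_\g$. For the product $M_\g N_\g$:
\begin{itemize}
\item the $(1,1)$ entry is $2g_0\cdot \frac{g_0}{2} = g_0^2$;
\item for $j\geq 1$, the $(1,j)$ entry is $2g_0\cdot(-\frac{g_j}{2}) + g_j g_0 = 0$;
\item the lower block is $g_0^2$ times the identity.
\end{itemize}
The product $N_\g M_\g$ is checked the same way. The entries of $N_\g$ involve the coefficient $\frac{1}{2}$, which is harmless since $\operatorname{char}\K=0$.

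For (a), let $\a\in\syz(\f)$, so $\f\a=0$. Then
$$\h(N_\g\a) = \f M_\g N_\g \a = g_0^2\,\f\a = 0,$$
so $N_\g\a\in\syz(\h)$. If bidegrees are tracked, $N_\g\a$ is shifted by $(0,n)$ relative to $\a$, but the statement only asks for membership in the syzygy module.

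For (b), let ${\bf p}\in\syz(\h)$, so $\h{\bf p}=0$. By associativity, $\f(M_\g{\bf p}) = (\f M_\g){\bf p} = \h{\bf p} = 0$, so $M_\g{\bf p}\in\syz(\f)$. Here $\syz(\f)$ is read over $R$, i.e. syzygies of $\f$ with coefficients in $R$, not only in $\K[s,u]$.

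There is no real obstacle in this argument. The only care needed is to keep the row/column conventions consistent and to interpret $\syz(\f)$ over the extended ring $R$ in part (b).
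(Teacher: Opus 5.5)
Your proof is correct and follows the same route as the paper. The paper cites the lemma from \cite{GW18syz} and justifies it only through the identities $\h = \f M_\g$ and $M_\g N_\g = N_\g M_\g = g_0^2{\bf I}$, which you verify and apply in the same way: the second identity gives (a), and the first alone gives (b).
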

Likewise, by symmetry, one may construct the matrices $M_\f$ and $N_\f$ to relate the syzygies of $\g$ to the syzygies of $\h$. The benefit of this approach is that the syzygies of $\f$ and $\g$ are much more accessible. Moreover, the polynomials of $\f$ and $\g$ may be described in terms of these syzygies.

\begin{rem}\label{res f and g remark}
We note that the syzygy modules $\syz(\f)$ and $\syz(\g)$ of the generating curves are free modules over $A=\K[s,u]$ and $B=\K[t,v]$ respectively, which follows from Hilbert's syzygy theorem \cite[19.7]{Eisenbud}. Hence the ideals of $\f$ and $\g$ have graded free resolutions
$$0\rightarrow \bigoplus_{i=1}^3 A(-m-\mu_i) \overset{\varphi_\f}{\longrightarrow}A(-m)^4 \rightarrow (\f)\rightarrow 0 \quad\quad\text{and}\quad\quad 0\rightarrow \bigoplus_{i=1}^3 B(-n-\nu_i) \overset{\varphi_\g}{\longrightarrow}B(-n)^4 \rightarrow (\g)\rightarrow 0,$$
where $\mu_1+\mu_2+\mu_3=m$ and $\nu_1+\nu_2+\nu_3=n$. Moreover, by the Hilbert-Burch theorem \cite[20.15]{Eisenbud} we have that $f_i =(-1)^i\det (\varphi_\f)_i$ and $g_i =(-1)^i\det (\varphi_\g)_i$ for $i=0,1,2,3$, where $(\varphi_\f)_i$ and $(\varphi_\g)_i$ denote the $3\times 3$ submatrices obtained by deleting row $i$ of $\varphi_\f$ and $\varphi_\g$, respectively.
\end{rem}

As the syzygy module of the curve $\f$ is free, one commonly says that the columns of $\varphi_\f$ form a \textit{$\mu$-basis} of $\f$, and similarly for $\g$ with the columns of $\varphi_\g$. Moreover, from the determinantal nature of these curves, one also commonly says that $\f$ and $\g$ are the \textit{outer product} of their syzygy matrices $\varphi_\f$ and $\varphi_\g$, respectively. Whereas $\h$ possesses neither of these properties, following \Cref{syz lemma} and \Cref{res f and g remark} we note that the columns of $N_\g\varphi_\f$ are linearly independent, and $\h$ is \textit{nearly} determinantal.


\begin{lemma}{\cite[2.5]{GW18syz}}\label{minors of Ngabc}
For $N_\g$ and $\varphi_\f$ as in (\ref{Mg and Ng definition}) and \Cref{res f and g remark}, write $(N_\g \varphi_\f)_i$ to denote the $3\times 3$ submatrix obtained by deleting row $i$ of $N_\g \varphi_\f$. For $i=0,1,2,3$, we have $(-1)^i \det (N_\g \varphi_\f)_i = \frac{1}{2}g_0^2 h_i$. 
\end{lemma}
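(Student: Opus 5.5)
The plan is to compute the maximal minors of $N_\g\varphi_\f$ with the Cauchy--Binet formula and then apply the Hilbert--Burch description of $\f$ from \Cref{res f and g remark}. Deleting row $i$ of the product $N_\g\varphi_\f$ is the same as multiplying $(N_\g)^{(i)}$, the $3\times 4$ matrix obtained by deleting row $i$ of $N_\g$, by the $4\times 3$ matrix $\varphi_\f$. Cauchy--Binet then gives
$$\det (N_\g\varphi_\f)_i=\sum_{j=0}^3 \det\big((N_\g)^{(i)}_{\hat j}\big)\det (\varphi_\f)_j,$$
where $(N_\g)^{(i)}_{\hat j}$ is the $3\times 3$ matrix obtained from $N_\g$ by deleting row $i$ and column $j$. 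Writing $\det(\varphi_\f)_j=(-1)^j f_j$ reduces everything to computing the $3\times 3$ minors of $N_\g$.

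Since $N_\g$ is so sparse, these minors can be read off directly, and I would use the adjugate to organise them. The cofactor $C_{ij}=(-1)^{i+j}\det (N_\g)_{\hat i\hat j}$ equals the $(j,i)$ entry of $\mathrm{adj}(N_\g)=\det(N_\g)N_\g^{-1}$. We have $\det N_\g=\tfrac12 g_0^4$, and $M_\g N_\g=g_0^2{\bf I}$ gives $N_\g^{-1}=g_0^{-2}M_\g$. Hence $\mathrm{adj}(N_\g)=\tfrac12 g_0^2 M_\g$, so $(-1)^{i+j}\det (N_\g)_{\hat i\hat j}=\tfrac12 g_0^2 (M_\g)_{ji}$.

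Substituting this into the Cauchy--Binet sum gives
$$(-1)^i\det(N_\g\varphi_\f)_i=\sum_j (-1)^{i+j}\det(N_\g)_{\hat i\hat j}\,f_j=\tfrac12 g_0^2\sum_j f_j (M_\g)_{ji}=\tfrac12 g_0^2(\f M_\g)_i=\tfrac12 g_0^2 h_i,$$
using $\h=\f M_\g$. The only real obstacle is keeping the sign conventions consistent: the Hilbert--Burch signs $(-1)^j$, the cofactor signs, and the fact that the adjugate is the transpose of the cofactor matrix. As a check, take $i=0$: the sum picks out the first column of $M_\g$, namely $(2g_0,0,0,0)^T$, so the right-hand side is $\tfrac12 g_0^2\cdot 2f_0g_0=\tfrac12 g_0^2h_0$, which agrees with the statement.
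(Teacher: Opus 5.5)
Your proof is correct. The paper gives no proof of its own for this lemma; it quotes it from \cite[2.5]{GW18syz}. Your argument is therefore a complete, self-contained proof. It combines Cauchy--Binet, the Hilbert--Burch normalization $\det(\varphi_\f)_j=(-1)^jf_j$, and the identity $\mathrm{adj}(N_\g)=\tfrac12 g_0^2M_\g$, and it uses exactly the two facts the paper records just before the lemma, namely $\h=\f M_\g$ and $M_\g N_\g=g_0^2{\bf I}$. The signs work out: the column set kept in Cauchy--Binet is the order-preserving complement of a single index, so no extra sign appears.

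One small point to tidy up: writing $N_\g^{-1}=g_0^{-2}M_\g$ presupposes $g_0\neq 0$. You can avoid this in either of two ways.
\begin{itemize}
\item Check $\mathrm{adj}(N_\g)=\tfrac12 g_0^2M_\g$ entrywise. The only cofactors that can be nonzero are $C_{00}=g_0^3$, $C_{jj}=\tfrac12 g_0^3$ and $C_{j0}=\tfrac12 g_0^2g_j$ for $1\leq j\leq 3$.
\item Prove the identity with $g_0,\dots,g_3$ as indeterminates and then specialize.
\end{itemize}
In any case, when $g_0=0$ both sides of the lemma vanish: $N_\g\varphi_\f$ then has rows $1,2,3$ equal to zero, so every $3\times 3$ minor is zero.
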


Following \Cref{minors of Ngabc}, the syzygies from the columns of $N_\g \varphi_\f$ are special, in a sense. Although the equations of $\h$ do not coincide with the minors of this matrix identically, their agreement up to multiple will be sufficient for most purposes. In particular, we note that these syzygies will correlate to notable \textit{moving planes} following the parameterization $\h$.

\subsection{Moving planes and surfaces}

We briefly recall the notion of \textit{moving surfaces} following a parameterization, and their connection to syzygies. We note that this technique has been widely used within the study of implicitization of curves and surfaces \cite{Cox08,CGZ00,SC95,SGD97,SSQK94}.

\begin{defn}
Writing $S=\K[w,x,y,z]$ to denote the coordinate ring of $\P^3$, a \textit{moving surface} of degree $d$ is a polynomial in $R\otimes_\K S = R[w,x,y,z]$ of the form
$$L=\sum_{i+j+k+l=d}a_{ijkl}w^ix^jy^kz^l$$
with $a_{ijkl}\in R=\K[s,u,t,v]$. Moreover, the moving surface $L$ is said to \textit{follow} the parameterization $\h$ if 
$$\sum_{i+j+k+l=d}a_{ijkl}h_0^i h_1^j h_2^k h_3^l =0.$$
\end{defn}

Notice that as the parameters $s,u,t,v$ assume different values, the coefficients $a_{ijkl}\in R$ vary as well. Thus the surface $V(L) \subseteq \P^3$ ``moves" as the parameters change, hence the terminology. In particular, when $d=1$, one obtains a \textit{moving plane} $L= a_0 w+a_1x + a_2 y+a_3 z$ which follows the parameterization if 
$$a_0 h_0+a_1 h_1 + a_2 h_2+a_3 h_3=0.$$
In other words, $L$ follows $\h$ if and only if $(a_0,a_1,a_2,a_3)$ is a syzygy of the ideal of $\h$. Similarly, the moving surfaces of degree $d$ that follow $\h$ correspond precisely to the syzygies of the $d$th power of the ideal of $\h$. 

 As noted, the columns of $N_\g \varphi_\f$ are special syzygies of $\h$, following \Cref{minors of Ngabc}. Writing $\a$, $\b$, and $\c$ to denote the columns of $\varphi_\f$, we may form three moving planes from the syzygies $N_\g \a$, $N_\g \b$, and $N_\g \c$, following the discussion above. Indeed, writing $\x = [w,x,y,z]$ to denote the coordinates of $\P^3$, we have that $\x N_\g \a$, $\x N_\g \b$, and $\x N_\g \c$ are moving planes following $\h$. Notice that, since these polynomials vanish at $\h$, these moving surfaces intersect $X_\h$ as the parameters vary. With this, one hopes to completely trace out $X_\h$ with moving surfaces, however care must be taken to avoid the \textit{basepoints} of the parameterization.

\subsection{Basepoints}

The \textit{basepoints} of $\h$ are the points in $\P^1\times\P^1$ where the parameterization is undefined, i.e. the locus $V(\h)$ where $h_0,h_1,h_2,h_3$ simultaneously vanish. Since $\gcd \f =\gcd \g =1$, it follows that $\gcd \h =1$ \cite[pg. 76]{GW18syz}, hence the base locus of $\h:\, \P^1\times \P^1 \dashrightarrow \P^3$ is a finite set of points. For a translational surface, there is a particularly simple description of this set of basepoints.

\begin{lemma}[{\cite[2.1]{GW18syz}}]\label{basepoint lemma}
A point $(s_0,u_0,t_0,v_0)$ in $\P^1\times\P^1$ is a basepoint of $\h$ if and only if $f_0(s_0,u_0) =0$ and $g_0(t_0,v_0)=0$.
\end{lemma}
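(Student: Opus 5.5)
The plan is to prove the two directions directly from the formulas $h_0 = 2f_0g_0$ and $h_i = f_0g_i + f_ig_0$ for $i=1,2,3$, using only the standing assumptions $\gcd(\f)=1$ and $\gcd(\g)=1$.

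\emph{The ``if'' direction.} Suppose $f_0(s_0,u_0)=0$ and $g_0(t_0,v_0)=0$. Then $h_0 = 2f_0g_0$ vanishes at the point. Each $h_i = f_0g_i + f_ig_0$ also vanishes, since both summands carry a vanishing factor. Hence $(s_0,u_0,t_0,v_0)\in V(\h)$.

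\emph{The ``only if'' direction.} Suppose all $h_i$ vanish at $(s_0,u_0,t_0,v_0)$. Since $\K$ has characteristic zero, $h_0 = 2f_0g_0 = 0$ forces $f_0(s_0,u_0)=0$ or $g_0(t_0,v_0)=0$; by the symmetry $\f\leftrightarrow\g$ it suffices to treat $f_0(s_0,u_0)=0$. Then for $i=1,2,3$ we get
$$0 = h_i(s_0,u_0,t_0,v_0) = f_i(s_0,u_0)\,g_0(t_0,v_0).$$
If $g_0(t_0,v_0)\neq 0$, this would give $f_1(s_0,u_0)=f_2(s_0,u_0)=f_3(s_0,u_0)=0$. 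Together with $f_0(s_0,u_0)=0$, all of $f_0,\dots,f_3$ would vanish at the point $(s_0,u_0)\in\P^1$.

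That common zero produces a common linear factor $u_0 s - s_0 u$ of $f_0,\dots,f_3$, because $\K$ is algebraically closed and binary forms factor into linear forms. This contradicts $\gcd(\f)=1$, so $g_0(t_0,v_0)=0$ as required.

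The only step needing care is this last one: converting a common zero on $\P^1$ into a nontrivial common factor, so that the hypothesis $\gcd(\f)=1$ applies. Everything else is immediate from the shape of $\h$.
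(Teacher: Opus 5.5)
Your proof is correct and is essentially the standard direct argument: the paper gives no proof of its own and simply quotes the result from \cite[2.1]{GW18syz}, and your reasoning from $h_0=2f_0g_0$, $h_i=f_0g_i+f_ig_0$ and $\gcd(\f)=\gcd(\g)=1$ is the natural way to establish it. The step you flag is just the factor theorem for binary forms (a zero $(s_0,u_0)$ yields the factor $u_0s-s_0u$), so it does not need algebraic closure, and characteristic zero is used only to guarantee $2\neq 0$.
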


It can be seen that translational surfaces tend to have many basepoints. Indeed, dehomogenizing by setting $u=v=1$, the polynomials $f_0(s,1)$ and $g_0(t,1)$ have exactly $m$ and $n$ roots respectively, counting multiplicity. From \Cref{basepoint lemma}, it then follows that $\h$ has at least $mn$ basepoints, counting multiplicity. While the multitude of basepoints tends to add obscurity to the problem, a useful consequence is that translational surfaces tend to have low implicit degree, due to the relation between these quantities established in \cite{AHW05}.

We note that the basepoints of $\h$ tend to be ill-behaved, imposing further restrictions. Moreover, we identify a class of particularly problematic basepoints where the syzygies $N_\g \a$, $N_\g \b$, and $N_\g \c$ simultaneously vanish. As noted in \cite[3.2]{GW18syz}, the existence of such a basepoint causes the implicitization method of \cite{GW18syz} to fail.

\begin{lemma}[{\cite[3.1--3.2]{GW18syz}}]\label{bad basepoint}
If $p=(s_0,u_0,t_0,v_0)$ is a basepoint of $\h$, then $\rk \,(N_\g\varphi_\f)(p)\leq 1$. Moreover, $\rk (N_\g\varphi_\f)(p) =0$ if and only if $\f(s_0,u_0) = \lambda \g(t_0,v_0)$ for some nonzero $\lambda\in \K$.   
\end{lemma}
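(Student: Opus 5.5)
At a basepoint $p=(s_0,u_0,t_0,v_0)$ we have $f_0(s_0,u_0)=0$ and $g_0(t_0,v_0)=0$ by \Cref{basepoint lemma}, so the plan is to evaluate $N_\g$ and $\varphi_\f$ at $p$ and compute directly. Since $g_0(t_0,v_0)=0$, the matrix $N_\g(p)$ has only its first row nonzero:
$$N_\g(p)=\begin{bmatrix} 0 & -\tfrac{1}{2}g_1(p) & -\tfrac12 g_2(p) & -\tfrac12 g_3(p)\\ 0&0&0&0\\0&0&0&0\\0&0&0&0\end{bmatrix}.$$
Hence $(N_\g\varphi_\f)(p)=N_\g(p)\varphi_\f(s_0,u_0)$ has at most one nonzero row, and $\rk (N_\g\varphi_\f)(p)\le 1$. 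This proves the first claim.

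For the second claim, write $\g(p)=\g(t_0,v_0)=(0,g_1,g_2,g_3)(p)$. The only possibly nonzero row of $(N_\g\varphi_\f)(p)$ is $-\tfrac12\,\g(p)\,\varphi_\f(s_0,u_0)$. So the rank is $0$ if and only if the row vector $\g(p)$ lies in the left kernel of the $4\times 3$ matrix $\varphi_\f(s_0,u_0)$.

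The key step is to identify this left kernel. Since $\gcd\f=1$, the vector $\f(s_0,u_0)$ is nonzero. By \Cref{res f and g remark}, the maximal minors of $\varphi_\f$ are $\pm f_i$, so they do not all vanish at $(s_0,u_0)$. Therefore $\varphi_\f(s_0,u_0)$ has rank exactly $3$, and its left kernel is one-dimensional. Moreover $\f\varphi_\f=0$ identically, since the columns are syzygies, so $\f(s_0,u_0)$ lies in the left kernel. Hence the left kernel is exactly $\K\,\f(s_0,u_0)$.

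Consequently, $\rk=0$ if and only if $\g(t_0,v_0)=\lambda'\f(s_0,u_0)$ for some $\lambda'\in\K$. Since $\gcd\g=1$, the vector $\g(t_0,v_0)$ is nonzero, which forces $\lambda'\neq0$. This is equivalent to $\f(s_0,u_0)=\lambda\g(t_0,v_0)$ with $\lambda=1/\lambda'\neq 0$. The converse direction runs through the same equivalences backwards.

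The only delicate point is the rank-$3$ claim for $\varphi_\f(s_0,u_0)$, which uses the Hilbert–Burch identification of the minors; the rest is direct evaluation.
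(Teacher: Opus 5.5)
Your proof is correct and complete. At a basepoint, $g_0(t_0,v_0)=0$ collapses $N_\g(p)$ to the single nonzero row $-\tfrac12\,\g(t_0,v_0)$. The left kernel of $\varphi_\f(s_0,u_0)$ is exactly $\K\,\f(s_0,u_0)$, because its maximal minors are $\pm f_i(s_0,u_0)$, which are not all zero since $\gcd \f=1$; this settles the rank-$3$ point you flagged as delicate.

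The paper gives no proof of this lemma and simply quotes it from \cite[3.1--3.2]{GW18syz}. Your direct-evaluation argument is the natural proof of that cited result. It uses the fact that the $\mu$-basis $\a,\b,\c$ remains linearly independent at every parameter value, so that $\f(s_0,u_0)$ spans the vectors orthogonal to all three.
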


In other words, $N_\g\varphi_\f$ vanishes precisely if the generating curves $\f$ and $\g$ intersect at a basepoint of $\h$. In this case, such a basepoint blows up to a curve lying on the surface $X_\h$. As previously mentioned, the presence of such a basepoint leads to the failure of the implicitization method in \cite[3.4]{GW18syz}, hence we are careful to identify them.

With the basepoints of $\h$ understood, we note that, away from them, the moving curves $\x N_\g \a$, $\x N_\g \b$, and $\x N_\g \c$ trace out the surface $X_\h$. We end this section with a criterion for when these moving planes intersect uniquely along $X_\h$.

\begin{prop}\label{Ngabc Moving surfaces intersect at Xh}
For a point $p=(s_0,u_0,t_0,v_0) \in \P^1\times\P^1$, if $g_0(t_0,v_0)\neq 0$ then the planes $\x(N_\g \a)(p)$, $\x (N_\g \b)(p)$, $\x (N_\g \c)(p)$ intersect at a unique point on $X_\h$.
\end{prop}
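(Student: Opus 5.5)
Fix $p=(s_0,u_0,t_0,v_0)$ with $g_0(t_0,v_0)\neq 0$. Then $N_\g(p)$ is invertible: it is upper triangular with determinant $\tfrac12 g_0(t_0,v_0)^4\neq 0$. The three planes are the zero sets of the linear forms $\x N_\g(p)\a(s_0,u_0)$, $\x N_\g(p)\b(s_0,u_0)$, $\x N_\g(p)\c(s_0,u_0)$. So their common intersection is the set of $\x$ with $\x\, N_\g(p)\varphi_\f(s_0,u_0)=0$, and the plan is to show that the $4\times 3$ matrix $N_\g(p)\varphi_\f(s_0,u_0)$ has rank exactly $3$. Its left kernel is then one-dimensional, i.e. a single point of $\P^3$. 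Since $N_\g(p)$ is invertible, this rank equals $\rk\varphi_\f(s_0,u_0)$.

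To see that $\varphi_\f(s_0,u_0)$ has rank $3$, I use \Cref{res f and g remark}. Its maximal minors are, up to sign, $f_0,f_1,f_2,f_3$ evaluated at $(s_0,u_0)$. These cannot all vanish at a point of $\P^1$ because $\gcd\f=1$ (two binary forms with no common factor have no common projective zero). Hence some $3\times 3$ minor is nonzero, and the rank is $3$.

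Next I identify the point. I expect it to be $\h(p)$, since the planes follow $\h$: $\h(p)N_\g(p)\a(s_0,u_0)=0$ and likewise for $\b$ and $\c$. So it suffices to check $\h(p)\neq 0$. By \Cref{basepoint lemma}, $p$ is a basepoint only if $g_0(t_0,v_0)=0$, which is excluded. Therefore $\h(p)$ is a well-defined point of $X_\h$. Alternatively, \Cref{minors of Ngabc} gives this directly: the maximal minors of $N_\g\varphi_\f$ are $\pm\tfrac12 g_0^2h_i$, which at $p$ are nonzero multiples of $h_i(p)$, so the minors are not all zero. By Cramer's rule, the left kernel is spanned by the signed minors, i.e. by $\h(p)$. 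I would include this remark as a consistency check.

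The only delicate point is making sure that ``unique point'' uses rank exactly $3$ and not merely $\ge 3$. This is automatic, since the matrix has $3$ columns. No real obstacle is expected: the argument reduces entirely to the invertibility of $N_\g(p)$ together with the Hilbert--Burch description of $\f$.
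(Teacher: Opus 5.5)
Your proof is correct, but its main line is organized differently from the paper's. The paper relies on a single identity, \Cref{minors of Ngabc}: the signed maximal minors of $N_\g\varphi_\f$ are $\tfrac12 g_0^2h_i$. When $g_0(t_0,v_0)\neq 0$, so that $p$ is not a basepoint, some minor is nonzero at $p$, which gives rank $3$. Cramer's rule then identifies the unique solution with $\h(p)$ in the same step.

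You instead separate the two tasks. Rank $3$ comes from the invertibility of $N_\g(p)$ (determinant $\tfrac12 g_0(t_0,v_0)^4$), combined with the Hilbert--Burch fact that $\varphi_\f$ has full rank at every point of $\P^1$ because $\gcd\f=1$. The identification of the point comes from \Cref{syz lemma}(a) together with $\h(p)\neq 0$, which follows from \Cref{basepoint lemma}.

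Your version is more elementary, since it never needs the computation behind \cite[2.5]{GW18syz}. It also shows that the rank statement is inherited from the $\mu$-basis of the curve $\f$ through the invertible matrix $N_\g(p)$. It carries over directly to \Cref{Intersection xA and xB and xC}, since the gcds divide $g_0$ and are therefore nonzero at $(t_0,v_0)$.

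The paper's version is more economical once the minors lemma is available: one determinantal identity gives both the rank and an explicit spanning vector of the kernel. That same identity, with the factor $G$, is what the paper reuses afterwards. Your ``alternatively'' remark is exactly the paper's proof.
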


\begin{proof}
Notice that since $g_0(t_0,v_0)\neq 0$, the point $p$ is not a basepoint of $\h$ by \Cref{basepoint lemma}. In particular, following \Cref{minors of Ngabc}, we have that $\rk (N_\g\varphi_\f) (p) =3$ as at least one of its maximal minors is non-vanishing at $p$. Thus $\x (N_\g\varphi_\f)(p) =0$ has a unique solution $(w_0,x_0,y_0,z_0)$ up to unit multiple, i.e. there is a unique point in $\P^3$ at which $\x(N_\g \a)(p)$, $\x (N_\g \b)(p)$, $\x (N_\g \c)(p)$ intersect.

To see that such a point of intersection lies on $X_\h$, note that $(N_\g\varphi_\f) (p)$ is an almost square matrix with full rank. Hence such a solution must be a unit multiple of its signed minors, by Cramer's rule. Thus by \Cref{minors of Ngabc}, and since $g_0(t_0,v_0)\neq 0$, up to unit multiple we have that $(w_0,x_0,y_0,z_0)$ agrees with $\h(p)$, hence this unique point of intersection lies on the surface $X_\h$ in $\P^3$.
\end{proof}


\section{Iterated resultants and translational surfaces}\label{Iterated Resultant Section}

We now introduce the main setting of the paper, and aim to provide an alternative to the algorithm given in \cite{GW18syz} for translational surfaces. We note that this previous method relies on a specialized resultant of the three moving planes $\x N_\g \a$, $\x N_\g \b$, and $\x N_\g \c$ after setting $u=v=1$, as the implicit equation appears as a factor. However, as previously mentioned, this method fails in the presence of any ill-behaved basepoints as in \Cref{bad basepoint}, as this resultant then vanishes identically. Additionally, for a translational surface of bidegree $(m,n)$, this resultant is computed as the determinant of a $2mn\times 2mn$ matrix, which may be large depending on the degrees of the generating curves.

Our objective is to compute the implicit equation using \textit{iterated resultants} that do not vanish identically. Moreover, these resultants will concern the determinant of much smaller matrices throughout. Additionally, whereas the method of \cite{GW18syz} requires one to pass to affine coordinates as one of the main elimination steps, we aim to remain projective.

\begin{set}\label{main setting}
Let $\f$ and $\g$ denote two space curves in $\P^3$, defined by homogeneous polynomials $f_0,f_1,f_2,f_3\in \K[s,u]$ of degree $m$, and homogeneous $g_0,g_1,g_2,g_3\in \K[t,v]$ of degree $n$, with $\gcd \f = \gcd \g =1$. Let $\h$ denote the corresponding translational surface of bidegree $(m,n)$ generated by $\f$ and $\g$. 
\end{set}

As noted in \Cref{res f and g remark}, the ideal of $\f$ has a graded free resolution of the form
\begin{equation}\label{res of f}
0\rightarrow\bigoplus_{i=1}^3 A(-m-\mu_i)\overset{\varphi_\f}{\longrightarrow}A(-m)^4 \rightarrow (\f)\rightarrow 0
\end{equation}
where $A=\K[s,u]$ and $\mu_1+\mu_2+\mu_3 =m$. We may assume that $\mu_1\leq \mu_2 \leq \mu_3$ throughout, and write $\a$, $\b$, and $\c$ to denote the syzygies of $\f$ as the ordered columns of $\varphi_\f$ in (\ref{res of f}). In particular, recall that $\syz(\f)$ is free, hence the set $\{\a,\b,\c\}$ is linearly independent.

As before, the syzygies of $\f$ may be translated to syzygies of $\h$ following \Cref{syz lemma}. However, we note that multiplying by $N_\g$ may introduce common factors among the entries of each column, hence we are careful to remove extraneous factors. As such, we introduce a set of \textit{reduced} syzygies of $\h$, following similar conventions used in \cite{GW18ruled}.

\begin{notat}\label{ABC notation}
With $\a$, $\b$, and $\c$ as above, write $\gcd(N_\g \a)$ to denote the greatest common factor among the entries of $N_\g \a$. With this, let $\A = \frac{N_\g \a}{\gcd(N_\g \a)}$ and note that this is still a syzygy on $\h$, since $R$ is a domain. Similarly, consider $\gcd(N_\g \b)$ and $\gcd(N_\g \c)$, and write $\B = \frac{N_\g \b}{\gcd(N_\g \b)}$ and $\C = \frac{N_\g \c}{\gcd(N_\g \c)}$.
\end{notat}

Although removing common factors may seem inconsequential, its purpose will become evident once we formulate resultants from the moving planes associated with these syzygies. First however, we make note of behavior similar to that observed in \Cref{Prelim Section}, now with the reduced syzygies $\A$, $\B$, and $\C$ in place of $N_\g\a$, $N_\g\b$, and $N_\g\c$.

\begin{prop}\label{gcds divide g0}
With the polynomials $\gcd(N_\g \a)$, $\gcd(N_\g \b)$, and $\gcd(N_\g \c)$ in \Cref{ABC notation}, we have the following.
\begin{itemize}
    \item[(a)] Each of $\gcd(N_\g \a)$, $\gcd(N_\g \b)$, and $\gcd(N_\g \c)$ divides $g_0$. In particular, these polynomials belong to $\K[t,v]$.\vspace{1mm}
    
    \item[(b)] The product $\gcd(N_\g\a)\gcd(N_\g\b)\gcd(N_\g\c)$ divides $g_0^2$.
\end{itemize}
\end{prop}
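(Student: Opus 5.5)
Part (a) comes from writing out $N_\g\a$ explicitly. With $\a=(a_0,a_1,a_2,a_3)^T$, $a_i\in A=\K[s,u]$, the definition (\ref{Mg and Ng definition}) gives
$$N_\g\a=\Big(\tfrac12(g_0a_0-g_1a_1-g_2a_2-g_3a_3),\; g_0a_1,\; g_0a_2,\; g_0a_3\Big)^T.$$
Let $d=\gcd(N_\g\a)$. Since $R$ is a UFD, it suffices to show that each irreducible $q\mid d$ divides $g_0$ with multiplicity no larger than its multiplicity in $d$. I will split according to whether $q$ involves $s,u$ or $t,v$. Since $g_0,g_1,g_2,g_3\in B=\K[t,v]$ and $a_i\in A$, I first want to see that $q$ cannot be a nonconstant element of $A$. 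Suppose $q\in A$ divides $g_0a_i$ for $i=1,2,3$. Then $q\mid a_1,a_2,a_3$, because $q$ is coprime to $g_0\neq 0$: $g_0$ has no $s,u$-part. Comparing coefficients in $t,v$ of the first entry, $q$ divides each coefficient of $g_0a_0-\sum g_ia_i$. Since $\gcd\g=1$, the $g_i$ are not all proportional, so expanding in a basis of monomials in $t,v$ should force $q\mid a_0$ as well. Then $q$ would divide every entry of the column $\a$ of the minimal free resolution (\ref{res of f}). That is impossible: the maximal minors of $\varphi_\f$ are the $f_i$ by \Cref{res f and g remark}, so $q$ would divide $\gcd\f=1$.

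The step where I expect the most care is this coefficient comparison. Concretely, I will pick a linear functional on $B_n$ that kills $g_1,g_2,g_3$ but not $g_0$, or another such functional, to isolate $a_0$. If $g_0$ lies in the span of $g_1,g_2,g_3$, I will use instead that the $a_i$ with $i\geq1$ are already divisible by $q$. Then the first entry is congruent to $\tfrac12 g_0a_0$ modulo $q$, so $q\mid g_0a_0$, and hence $q\mid a_0$. This handles every case, so the coefficient argument is unnecessary.

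Mixed irreducibles $q$, involving both sets of variables, are excluded in the same way. Such a $q$ divides $g_0a_i$ for $i=1,2,3$, hence, being prime and not dividing the pure element $g_0$, it divides $a_1,a_2,a_3\in A$. But a mixed irreducible cannot divide a nonzero element of $A$ (compare bidegrees), so $a_1=a_2=a_3=0$. Then the first entry is $\tfrac12 g_0a_0$ with $a_0\ne0$, and $q$ divides neither factor, a contradiction. A pure $A$-factor was excluded above, so $d\in B$. Moreover $d$ divides $g_0a_i$ for $i=1,2,3$ with $\gcd(d,a_i)=1$ whenever $a_i\ne0$. If some $a_i\neq0$ this gives $d\mid g_0$. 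Otherwise $d\mid \tfrac12 g_0 a_0$, so again $d\mid g_0$. This proves (a), and the same argument applies to $\b$ and $\c$.

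For (b), I use \Cref{minors of Ngabc}: $(-1)^i\det(N_\g\varphi_\f)_i=\tfrac12 g_0^2h_i$. Each $3\times3$ minor is multilinear in the columns, so the product $d_\a d_\b d_\c$ of the three gcds divides every minor, and hence divides $g_0^2h_i$ for all $i$. By (a), this product lies in $B$. Factoring out, the minors become the minors of $[\A\,\B\,\C]$. So $d_\a d_\b d_\c$ divides $g_0^2\gcd(h_0,\dots,h_3)$. Since $\gcd\h=1$ by \cite[pg. 76]{GW18syz}, unique factorization gives $d_\a d_\b d_\c\mid g_0^2$.
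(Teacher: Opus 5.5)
Your proof is correct, but both parts take a different route from the paper.

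\textbf{Part (a).} The paper argues through the minors. By \Cref{minors of Ngabc}, $\gcd(N_\g\a)$ divides every $\tfrac12 g_0^2h_i$. The cited fact $\gcd\h=1$ then forces every irreducible factor to divide $g_0$, which puts $\gcd(N_\g\a)$ in $\K[t,v]$. Finally, $\gcd(N_\g\a)\mid g_0a_1$ with $a_1\in\K[s,u]$ gives $\gcd(N_\g\a)\mid g_0$.

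You never use the minors of $N_\g\varphi_\f$. You rely only on two things: the explicit shape of $N_\g\a$, and the fact that a column of $\varphi_\f$ has no nonconstant common factor (by Hilbert--Burch and $\gcd\f=1$). With these you rule out pure $\K[s,u]$ factors and mixed factors by hand. This is more elementary and independent of \Cref{minors of Ngabc} and of $\gcd\h=1$, at the cost of a case analysis.

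\textbf{Part (b).} Write $P$ for the product of the three gcds. The paper uses (a) to get $P\mid g_0^3$, then expands $h_i=f_0g_i+f_ig_0$ to reach $P\mid g_0^2g_i$, and concludes with $\gcd\g=1$. Your argument is shorter: $P$ divides $\gcd(g_0^2h_0,\dots,g_0^2h_3)=g_0^2\gcd(\h)=g_0^2$. This does not need (a), so your remarks that $P\in\K[t,v]$ and about the minors of $[\A\,\B\,\C]$ can be dropped.

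The trade-off is that your (b) reintroduces the cited fact $\gcd\h=1$. For a fully self-contained proof, pair your (a) with the paper's argument for (b), which needs only \Cref{minors of Ngabc}, (a), and $\gcd\g=1$.

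\textbf{Minor slips to tidy up.}
\begin{itemize}
\item Your opening reduction states the multiplicity inequality backwards: you need the multiplicity of $q$ in $d$ to be at most its multiplicity in $g_0$. Your actual argument bypasses this anyway, by showing $d\in\K[t,v]$ and using $\gcd(d,a_i)=1$.
\item The resolution (\ref{res of f}) need not be minimal (for example when $\dim_\K\f<4$). You only need the Hilbert--Burch identity of \Cref{res f and g remark}, which holds regardless.
\item The branch $a_1=a_2=a_3=0$ is vacuous, since it would give $f_0a_0=0$ and hence $\a=0$.
\item The abandoned coefficient-comparison discussion should be removed from the write-up.
\end{itemize}
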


\begin{proof}
We prove the first part for $\gcd(N_\g \a)$ as the result follows similarly for the remaining polynomials. Recall from \Cref{minors of Ngabc} that the minors of $N_\g \varphi_\f$ agree with $\frac{1}{2}g_0^2 \h$. Hence it follows that $\gcd(N_\g \a)$ divides $\frac{1}{2}g_0^2 h_i$ for $i=0,1,2,3$. However, since $\gcd \f =\gcd \g =1$, we note that $\gcd \h =1$ as well \cite[pg. 76]{GW18syz}. Hence any irreducible factor of $\gcd(N_\g \a)$ must divide $g_0$, and so we have that $\gcd(N_\g \a) \in \K[t,v]$ by degree considerations. Whereas every irreducible factor of $\gcd(N_\g \a)$ divides $g_0$, writing $a_0,a_1,a_2,a_3$ to denote the entries of $\a$, we have
\begin{equation}\label{Nga description}
N_\g\a = \begin{bmatrix}
\frac{g_0a_0-g_1a_1-g_2a_2-g_3a_3}{2}\\
g_0a_1\\
g_0a_2\\
g_0a_3
\end{bmatrix}
\end{equation}
and so $\gcd(N_\g \a)$ divides $g_0a_1$, for instance. However, since $\gcd(N_\g \a) \in \K[t,v]$ and $a_1 \in\K[s,u]$, it follows that $\gcd(N_\g \a)$ divides $g_0$. Similarly it follows that $\gcd(N_\g \b)$ and $\gcd(N_\g \c)$ divide $g_0$ which proves (a).

For part (b), again recall that the signed minors of $N_\g\varphi_\f$ are precisely $\tfrac{1}{2}g_0^2\h$. With this, it follows that $\gcd(N_\g\a)\gcd(N_\g\b)\gcd(N_\g\c)$ divides $g_0^2h_i$ for $i=0,1,2,3$. From the previous part, it is clear that $\gcd(N_\g\a)\gcd(N_\g\b)\gcd(N_\g\c)$ divides $g_0^3$. As such, and since $h_i=f_0g_i + f_ig_0$, it follows that the product $\gcd(N_\g\a)\gcd(N_\g\b)\gcd(N_\g\c)$ divides $g_0^2f_0g_i$. Noting that $f_0 \in \K[s,u]$ and the remaining polynomials involved belong to $\K[t,v]$, it follows that $\gcd(N_\g\a)\gcd(N_\g\b)\gcd(N_\g\c)$ divides $g_0^2g_i$ for $i=0,1,2,3$. Now if $\gcd(N_\g\a)\gcd(N_\g\b)\gcd(N_\g\c)$ does not divide $g_0^2$, then $g_0,g_1,g_2,g_3$ must share a common factor. However, this is impossible as $\gcd (\g) =1$.
\end{proof}

\begin{rem}\label{degrees ABC remark}
 Following \Cref{gcds divide g0}, notice that, within the bigrading of $R=\K[s,u,t,v]$, the entries of the syzygies $\A$, $\B$, and $\C$ have degree $\mu_1$, $\mu_2$, $\mu_3$ respectively, with respect to $s,u$, and degree at most $n$, with respect to $t,v$. As before, recall that we assume that $\mu_1\leq \mu_2\leq \mu_3$.   
\end{rem}

From \Cref{gcds divide g0} and \Cref{minors of Ngabc} it follows that the signed minors of $[\A\,\B\,\C]$ are precisely $\tfrac{1}{2} G\h$ where 
\begin{equation}\label{G defn}
G=\frac{g_0^2}{\gcd(N_\g\a)\gcd(N_\g\b)\gcd(N_\g\c)}
\end{equation} 
and we note that this is a polynomial in $\K[t,v]$ by \Cref{gcds divide g0}. With this, we may provide an analog of \Cref{Ngabc Moving surfaces intersect at Xh} for the moving planes $\x\A$, $\x\B$, $\x\C$.

\begin{prop}\label{Intersection xA and xB and xC}
For a point $p=(s_0,u_0,t_0,v_0) \in \P^1\times\P^1$, if $g_0(t_0,v_0)\neq 0$ then the planes $\x\A(p)$, $\x \B(p)$, $\x \C(p)$ intersect at a unique point on the surface $X_\h \subseteq \P^3$.
\end{prop}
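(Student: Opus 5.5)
The plan is to reduce directly to \Cref{Ngabc Moving surfaces intersect at Xh}, using that $\A$, $\B$, $\C$ differ from $N_\g\a$, $N_\g\b$, $N_\g\c$ only by the scalar factors $\gcd(N_\g\a)$, $\gcd(N_\g\b)$, $\gcd(N_\g\c)$. By \Cref{gcds divide g0}(a), each of these gcds lies in $\K[t,v]$ and divides $g_0$.

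First, fix $p=(s_0,u_0,t_0,v_0)$ with $g_0(t_0,v_0)\neq 0$. Since each gcd divides $g_0$, write $g_0 = \gcd(N_\g\a)\cdot q$ for some polynomial $q$. Evaluating at $(t_0,v_0)$ shows $\gcd(N_\g\a)(t_0,v_0)\neq 0$, and likewise for the other two gcds. Hence
$$(N_\g\a)(p)=\gcd(N_\g\a)(t_0,v_0)\,\A(p)$$
with a nonzero scalar, and similarly for $\B$ and $\C$. Consequently the planes $\x\A(p)$, $\x\B(p)$, $\x\C(p)$ coincide with $\x(N_\g\a)(p)$, $\x(N_\g\b)(p)$, $\x(N_\g\c)(p)$: each linear form is the other times a nonzero constant, so they define the same planes in $\P^3$. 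The conclusion then follows immediately from \Cref{Ngabc Moving surfaces intersect at Xh}.

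Alternatively, one can argue intrinsically, mirroring the earlier proof, using that the signed minors of $[\A\,\B\,\C]$ are $\tfrac12 G\h$ with $G$ as in (\ref{G defn}). Since $G$ divides $g_0^2$, we have $G(t_0,v_0)\neq0$. The point $p$ is also not a basepoint by \Cref{basepoint lemma}, so some $h_i(p)\neq 0$, and $[\A\,\B\,\C](p)$ has rank $3$. Its kernel $\{\x : \x[\A\,\B\,\C](p)=0\}$ is therefore one-dimensional. By Cramer's rule it is spanned by the vector of signed minors, which is a nonzero multiple of $\h(p)\in X_\h$.

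There is no real obstacle here. The only point needing care is that the gcds do not vanish at $p$, and this is exactly where \Cref{gcds divide g0} is used.
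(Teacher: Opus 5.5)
Your proposal is correct. Your second argument is essentially the paper's proof. The paper notes that the signed minors of $[\A\,\B\,\C]$ are $\tfrac12 G\h$, and uses $G\mid g_0^2$ from \Cref{gcds divide g0}(b) to get $G(t_0,v_0)\neq 0$. Since $p$ is not a basepoint, this gives rank $3$, and the paper then defers to the argument of \Cref{Ngabc Moving surfaces intersect at Xh}.

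Your first argument is a slightly different and somewhat leaner reduction. It uses only part (a) of \Cref{gcds divide g0}: each column of $[\A\,\B\,\C](p)$ is a nonzero scalar multiple of the corresponding column of $(N_\g\varphi_\f)(p)$. So the three planes are literally the planes of \Cref{Ngabc Moving surfaces intersect at Xh}, and that result applies verbatim. You need neither the minor identity for $[\A\,\B\,\C]$ nor the fact that $G$ is a polynomial.

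Both routes rest on the same point, namely that the gcds cannot vanish where $g_0$ does not. The paper packages this into the single factor $G$ and reruns the rank and Cramer argument. You transfer the conclusion directly from the unreduced planes.
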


\begin{proof}
Notice that, since $g_0(t_0,v_0)\neq 0$, $p$ is not a basepoint of $\h$ by \Cref{basepoint lemma}. As noted, from \Cref{minors of Ngabc} and \Cref{gcds divide g0}, it follows that the signed minors of $ [\A\,\B\,\C]$ are $\tfrac{1}{2} G\h$, for $G$ in (\ref{G defn}). Since $G$ divides $g_0^2$, if $g_0(t_0,v_0)\neq 0$ then $G(t_0,v_0)\neq 0$ as well, and so it follows that $ [\A\,\B\,\C](p)$ has rank 3. The claim then follows in a manner similar to the proof of \Cref{Ngabc Moving surfaces intersect at Xh}. 
\end{proof}

As previously noted, the syzygy module $\syz(\h)$ is not free. However, the syzygies $\A$, $\B$, $\C$ do share similarities with their counterparts $\a$, $\b$, and $\c$, which form a basis for $\syz(\f)$.

\begin{prop}\label{A B C linearly independent}
The syzygies $\A$, $\B$, $\C$ are linearly independent over $R=\K[s,u,t,v]$.
\end{prop}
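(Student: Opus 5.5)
Since linear independence over the domain $R$ amounts to the $4\times 3$ matrix $[\A\,\B\,\C]$ having rank $3$ over $\quot(R)$, the plan is to find a nonzero maximal minor. The discussion preceding \Cref{Intersection xA and xB and xB} already supplies one: the signed $3\times 3$ minors of $[\A\,\B\,\C]$ are $\tfrac12 G\h$, with $G$ as in (\ref{G defn}). So it suffices to check that $G\neq 0$ and that some $h_i\neq 0$.

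For $G$: $g_0\neq 0$ is needed, since otherwise \Cref{ABC notation} itself makes no sense. We have $G = g_0^2/(\gcd(N_\g\a)\gcd(N_\g\b)\gcd(N_\g\c))$, which is a polynomial by \Cref{gcds divide g0}(b), and it is nonzero because $g_0^2\neq 0$. Equivalently, we could use that $g_0\neq 0$ forces $M_\g$ and $N_\g$ to be invertible over $\quot(R)$, because $M_\g N_\g = g_0^2{\bf I}$. For $\h$: $\gcd\h=1$ (\cite[pg. 76]{GW18syz}) rules out $\h=0$. Hence $\tfrac12 G h_i\neq 0$ for some $i$, and $[\A\,\B\,\C]$ has rank $3$.

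A cleaner alternative avoids the minors entirely. Suppose $r_1\A+r_2\B+r_3\C=0$ with $r_j\in R$. Clearing the gcd denominators gives a relation $r_1'N_\g\a + r_2'N_\g\b + r_3'N_\g\c = 0$, where $r_1' = r_1\gcd(N_\g\b)\gcd(N_\g\c)$ and similarly for the others. Multiplying by $M_\g$ gives $g_0^2(r_1'\a+r_2'\b+r_3'\c)=0$. Since $R$ is a domain, $r_1'\a+r_2'\b+r_3'\c=0$. Now $\a,\b,\c$ are linearly independent over $A$, being the columns of the injective map $\varphi_\f$ in (\ref{res of f}). They stay independent over $R=A\otimes_\K B$, since $R$ is flat over $A$, or because a nonzero $3\times3$ minor of $\varphi_\f$ (namely $\pm f_i$) remains nonzero in $R$. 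Hence $r_j'=0$, and since the gcds are nonzero, $r_j=0$.

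The only delicate point is justifying $g_0\neq 0$. This is implicit in the setting, since the gcds and the matrix $N_\g$ of \Cref{ABC notation} require it. I would record it explicitly as a standing assumption (if $g_0=0$, then $h_0=0$ and the parameterization degenerates). Everything else is routine.
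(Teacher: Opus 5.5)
Your proposal is correct, and your second (``cleaner'') argument is exactly the paper's proof: clear the three gcds, multiply by $M_\g$, cancel $g_0^2$ using $M_\g N_\g = g_0^2{\bf I}$, and invoke the independence of $\a,\b,\c$. You also justify the passage from $\K[s,u]$ to $R$ (by flatness, or by a surviving minor $\pm f_i$ of $\varphi_\f$), which the paper only asserts.

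Your primary argument takes a genuinely different route. Instead of pulling a relation back to $\syz(\f)$, you exhibit a nonzero maximal minor of $[\A\,\B\,\C]$, using the identity that its signed minors are $\tfrac12 G\h$. The paper establishes this identity from \Cref{minors of Ngabc} and \Cref{gcds divide g0} before this proposition, so nothing is circular. Your route is shorter and gives more information: the rank of $[\A\,\B\,\C]$ drops only on $V(G\h)$, which is exactly what \Cref{Intersection xA and xB and xC} exploits. Indeed, the rank-$3$ conclusion there at a single point already implies the statement. The cost is reliance on the determinantal identity imported from \cite{GW18syz}, together with $\h\neq 0$. The paper's route is more elementary and self-contained, needing only $M_\g N_\g=g_0^2{\bf I}$ and the freeness of $\syz(\f)$.

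Making $g_0\neq 0$ an explicit standing hypothesis is appropriate; the paper relies on it in the same tacit way.
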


\begin{proof}
    Suppose that there exist polynomials $\alpha,\beta,\gamma \in R$ such that 
    $$\alpha \A + \beta \B + \gamma \C = 0.$$
For $M_\g$ as in (\ref{Mg and Ng definition}), multiplying by the matrix $\gcd(N_\g \a) \gcd(N_\g \b)\gcd(N_\g \c) \cdot M_\g$ yields
$$\alpha \gcd(N_\g \b)\gcd(N_\g \c) g_0^2 \cdot  \a + \beta \gcd(N_\g \a)\gcd(N_\g \c) g_0^2 \cdot \b +\gamma \gcd(N_\g \a)\gcd(N_\g \b) g_0^2 \cdot \c = 0,$$
noting that $M_\g N_\g = N_\g M_\g = g_0^2{\bf I}$. However, recall that $\{\a,\b,\c\}$ is linearly independent over $\K[s,u]$, and hence over $R$. Moreover, since $g_0\neq 0$ and the greatest common factors are non-vanishing as well, it follows that $\alpha=\beta=\gamma =0$.
\end{proof}

\subsection{Iterated resultants}

We now begin the process of finding the implicit equation of $X_\h$ in $\K[w,x,y,z]$, the coordinate ring of $\P^3$. Writing $\x$ to denote these coordinates, we may form the moving surfaces $\x\A$, $\x\B$, and $\x\C$ as in \Cref{Prelim Section}. Since $\A$, $\B$, and $\C$ are syzygies on $\h$, these moving surfaces follow the parameterization.

We employ the use of homogeneous resultants throughout, i.e. the determinant of the classical \textit{Sylvester matrix}. For $f$ and $g$ homogeneous polynomials  in $A[x,y]$ of degree $m$ and $n$ respectively, where $A$ is a unique factorization domain, write
\begin{align*}
  f&=a_mx^m + a_{m-1}x^{m-1}y+\cdots + a_0y^m\\
  g&=b_nx^n + b_{n-1}x^{n-1}y+\cdots + b_0y^n.
\end{align*}
Recall that the Sylvester matrix of $f$ and $g$ with respect to $x,y$ is the $(m+n)\times (m+n)$ matrix
\begin{equation}\label{Sylvester matrix definition}
\Syl_{xy}(f,g)=
\left[
\begin{array}{ccccccc}
a_m & a_{m-1} & \cdots & a_0 &        &        &        \\
    & a_m     & a_{m-1}& \cdots & a_0 &        &        \\
    &         & \ddots &        &     & \ddots &        \\
    &         &        & a_m    & a_{m-1} & \cdots & a_0 \\[1ex]
b_n & b_{n-1} & \cdots & b_0 &        &        &        \\
    & b_n     & b_{n-1}& \cdots & b_0 &        &        \\
    &         & \ddots &        &     & \ddots &        \\
    &         &        & b_n    & b_{n-1} & \cdots & b_0
\end{array}
\right]
\begin{array}{l}
\left.\rule{0pt}{1.1cm}\right\}\;\text{$n$}\\[6pt]
\left.\rule{0pt}{1.1cm}\right\}\;\text{$m$}
\end{array}
\end{equation}
and $\Res_{xy}(f,g) = \det \Syl_{xy}(f,g)$ is the Sylvester \textit{resultant} of $f$ and $g$, with respect to $x,y$. For an introduction to resultants, we refer the reader to \cite[ch. 3]{CLO} and \cite{GKZ} for the more general theory. For our purposes we note that $\Res_{xy}(f,g)=0$ precisely when $f,g$ share a common factor in $A[x,y]$, or namely if $f$ and $g$ have a common zero in $\P^1_L$, where $L$ denotes the algebraic closure of the quotient field $\quot(A)$.

Resultants are a classical technique within elimination theory as they are expressions in the coefficients of the polynomials involved, eliminating their indeterminates. Moreover, provided they do not vanish identically, the resultant of two moving surfaces following a parameterization will be seen to follow it as well. As such, we employ this technique with the moving surfaces $\x\A$, $\x\B$, and $\x\C$, which share no common factor, by construction.

\begin{prop}\label{xA xB xC irreducible and coprime}
The polynomials $\x\A$, $\x\B$, and $\x\C$ are irreducible and pairwise coprime. 
\end{prop}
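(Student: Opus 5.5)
The plan is to view each of $\x\A$, $\x\B$, $\x\C$ as a linear form in $w,x,y,z$ with coefficients in $R$, and to exploit that structure.

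\emph{Irreducibility.} Consider $\x\A = A_0w + A_1x + A_2y + A_3z$, a polynomial in $R[w,x,y,z]$ of degree one in the variables $w,x,y,z$. Suppose $\x\A = PQ$. Degree in $w,x,y,z$ is additive, so one factor, say $P$, has degree zero there and lies in $R$. Comparing coefficients of $w,x,y,z$ shows that $P$ divides each $A_i$. By \Cref{ABC notation}, $\A = N_\g\a/\gcd(N_\g\a)$ has coprime entries, so $P$ is a unit, i.e.\ $P\in\K^*$. We must also know $\A\neq 0$, which is clear because $\a\neq 0$ and $N_\g$ is invertible over $\quot(R)$. Hence $\x\A$ is irreducible, and the same argument applies to $\x\B$ and $\x\C$.

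\emph{Pairwise coprimality.} Since the three polynomials are irreducible, it suffices to show that no two are associates. Suppose, for example, $\x\A = \lambda\,\x\B$ for some unit $\lambda\in\K^*$. Comparing coefficients gives $\A = \lambda\B$, which means $\A - \lambda\B = 0$ is a nontrivial $R$-linear relation. This contradicts \Cref{A B C linearly independent}. The same argument handles the pairs $(\A,\C)$ and $(\B,\C)$.

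\emph{Main point of care.} The only subtle step is the first one: factors of degree zero in $\x$ must divide the content. This is precisely why the gcd was removed in \Cref{ABC notation}. Without that reduction, the polynomial $\x N_\g\a$ could be divisible by a factor of $g_0$, by \Cref{gcds divide g0}. The rest of the argument is a routine use of the UFD property of $R[w,x,y,z]$.
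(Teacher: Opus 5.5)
Your proof is correct and follows essentially the same argument as the paper. Irreducibility comes from $\x\A$ being linear in $w,x,y,z$ with content removed by construction in \Cref{ABC notation}, and pairwise coprimality from irreducibility together with the linear independence in \Cref{A B C linearly independent}. Your explicit checks that $\A\neq 0$ and that associates differ by a scalar in $\K^*$ are small details the paper leaves implicit.
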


\begin{proof}
We verify the irreducibility of $\x\A$ as the claim then follows similarly for the remaining polynomials. Notice that, as a polynomial in $R[w,x,y,z]$, $\x\A$ is linear. Hence by degree considerations, if $\x\A$ is reducible, then it must admit a nontrivial factor in $R=\K[s,u,t,v]$. However, as $\{w,x,y,z\}$ is an $R$-linearly independent set, such a factor must then divide each of the coefficients of $\x\A$. However, these coefficients are precisely the entries of $\A$ which share no common factor by construction, hence no such nontrivial factor can exist. The second claim now follows from \Cref{A B C linearly independent}, noting that $\x\A$, $\x\B$, and $\x\C$ are irreducible.
\end{proof}


Recall that we assume that $\mu_1\leq \mu_2\leq \mu_3$ in (\ref{res of f}), and these are the degrees of $\x\A$, $\x\B$, $\x\C$ with respect to $s,u$ following \Cref{degrees ABC remark}. We begin with their homogeneous resultants with respect to $s,u$, and recall that these are the determinants of the Sylvester matrices of these moving planes, taken as polynomials in $s,u$ with coefficients in $\K[t,v,w,x,y,z]$.

\begin{prop}\label{Res su nonzero}
Both $\Res_{su}(\x\A,\x\C)$ and $\Res_{su}(\x\B,\x\C)$ are nonzero polynomials in $\K[t,v,w,x,y,z]$. Moreover, as moving surfaces, $\Res_{su}(\x\A,\x\C)$ and $\Res_{su}(\x\B,\x\C)$ follow the parameterization $\h$.
\end{prop}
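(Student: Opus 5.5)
For the nonvanishing, I will use the characterization of the resultant recalled above. View $\x\A$ and $\x\C$ as homogeneous polynomials in $s,u$ of degrees $\mu_1$ and $\mu_3$ (\Cref{degrees ABC remark}) with coefficients in the UFD $D=\K[t,v,w,x,y,z]$. The resultant $\Res_{su}(\x\A,\x\C)$ vanishes exactly when $\x\A$ and $\x\C$ share a common factor of positive degree in $s,u$ in $D[s,u]$; by Gauss's lemma, a nonconstant common factor would also be a common factor in $R[w,x,y,z]$. By \Cref{xA xB xC irreducible and coprime}, $\x\A$ and $\x\C$ are irreducible and coprime, so they share no nonunit common factor. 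Hence the resultant is nonzero, and the same argument applies to $(\x\B,\x\C)$.

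Two small points need care here. First, the Sylvester matrix must be formed with respect to the actual degrees $\mu_1,\mu_3$, which requires that the leading coefficients not all vanish identically. This holds because the entries of $\A$ genuinely have $s,u$-degree $\mu_1$ (dividing by $\gcd(N_\g\a)\in\K[t,v]$ does not change $s,u$-degree), and because $\x\A\ne 0$. Second, if $\mu_1=0$, so that $\x\A$ has no $s,u$ dependence, the resultant is just $(\x\A)^{\mu_3}$, which is nonzero. In that degenerate case the coprimality argument is not even needed.

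For the following property, I will use the standard fact that a resultant lies in the ideal generated by its two inputs, up to a power of the variables. Concretely, for homogeneous $F,G$ in $s,u$ there exist polynomials $P,Q$ in $D[s,u]$ with
$$P\,\x\A + Q\,\x\C = s^{N}\Res_{su}(\x\A,\x\C)$$
for $N=\mu_1+\mu_3-1$. The same identity holds with $u^N$ in place of $s^N$; both follow from the adjugate of the Sylvester matrix (Cramer's rule applied to the monomial vector). Now substitute $w,x,y,z\mapsto h_0,h_1,h_2,h_3$. Since $\A$ and $\C$ are syzygies of $\h$, both $\x\A$ and $\x\C$ become $0$, so $s^N\Res_{su}(\x\A,\x\C)(t,v,\h)=0$ in the domain $R$. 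As $s^N\ne0$, we get $\Res_{su}(\x\A,\x\C)(t,v,\h)=0$. Writing the resultant as $\sum a_{ijkl}w^ix^jy^kz^l$ with $a_{ijkl}\in\K[t,v]\subseteq R$, this says exactly that it is a moving surface following $\h$. Homogeneity in $w,x,y,z$ holds because every entry of the Sylvester matrix is linear in $\x$, so the determinant is homogeneous of degree $\mu_1+\mu_3$.

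I expect the main obstacle to be the first part, specifically justifying that a common factor in $D[s,u]$ contradicts \Cref{xA xB xC irreducible and coprime}. That result gives irreducibility in $R[w,x,y,z]=D[s,u]$, which is the same ring, so the argument works, but I will state this identification explicitly. I will also confirm that irreducible plus linearly independent, and hence not associate, gives coprime.
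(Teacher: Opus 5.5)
Your proposal is correct and has the same structure as the paper's proof. Nonvanishing comes from the irreducibility and pairwise coprimality of $\x\A,\x\B,\x\C$ (\Cref{xA xB xC irreducible and coprime}) together with the common-factor characterization of the resultant, exactly as in the paper.

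The one step you argue differently is the "follows $\h$" property. The paper simply writes $\Res_{su}(\x\A,\x\C)(\h)=\Res_{su}(\h\A,\h\C)=0$, which is informal taken literally, since $\h$ itself depends on $s,u$. Your route makes this step rigorous: the adjugate of the Sylvester matrix gives the identity $P\,\x\A+Q\,\x\C=s^{N}\Res_{su}(\x\A,\x\C)$, and you then substitute $\x\mapsto\h$ in the domain $R$ and cancel $s^N$.

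State explicitly that $N=\mu_1+\mu_3-1\geq 0$ relies on $\mu_3\geq 1$. This holds because $\mu_1+\mu_2+\mu_3=m\geq 1$ and $\mu_3$ is the largest, and the paper records it. Without it the Sylvester matrix would be empty, and the resultant $1$ would not follow $\h$; your $\mu_1=0$ case $(\x\A)^{\mu_3}$ needs the same fact.
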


\begin{proof}
Notice that, since $\mu_1\leq \mu_2\leq \mu_3$, we have that $\mu_3\geq 1$. As this is the degree of $\x\C$, with respect to $s,u$, it follows that neither Sylvester matrix $\Syl_{su}(\x\A,\x\C)$ nor $\Syl_{su}(\x\B,\x\C)$ is empty. With this, $\Res_{su}(\x\A,\x\C) =0$ only if $\x\A$ and $\x\C$ share a common factor, which is impossible by \Cref{xA xB xC irreducible and coprime}. Similarly, we have that $\Res_{su}(\x\B,\x\C) \neq 0$.

Now that $\Res_{su}(\x\A,\x\C)$ is seen to be a nonzero polynomial, it is clear that it follows $\h$, as a moving surface. Indeed, since both $\h\A=0$ and $\h\C=0$, the polynomials $\x\A$ and $\x\C$ clearly have a common factor when evaluated at $\h$, hence $\Res_{su}(\x\A,\x\C) (\h) = \Res_{su}(\h\A,\h\C) =0$. Similarly, it follows that $\Res_{su}(\x\B,\x\C)$ follows $\h$ as well.
\end{proof}

\begin{cor}\label{Res su factors} 
The resultants in \Cref{Res su nonzero} factor as $\Res_{su}(\x\A,\x\C) = E_0 F_0$ and $\Res_{su}(\x\B,\x\C) = E_1 F_1$ for polynomials $E_0, E_1 ,F_0, F_1 \in \K[t,v,w,x,y,z]$ where $F_0, F_1$ are irreducible polynomials and follow the parameterization $\h$, as moving surfaces.   
\end{cor}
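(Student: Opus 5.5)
The plan is to exploit the fact that $\K[t,v,w,x,y,z]$ is a UFD and that ``following $\h$'' means vanishing under the substitution $\x\mapsto\h$, which is a ring homomorphism
$$\phi:\K[t,v,w,x,y,z]\to R=\K[s,u,t,v],\qquad w,x,y,z\mapsto h_0,h_1,h_2,h_3,$$
fixing $t,v$. I will argue only for $\Res_{su}(\x\A,\x\C)$; the argument for $\Res_{su}(\x\B,\x\C)$ is identical.

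First, by \Cref{Res su nonzero}, $P=\Res_{su}(\x\A,\x\C)$ is a nonzero polynomial, so it has a factorization $P=c\,Q_1\cdots Q_r$ into irreducibles $Q_j\in\K[t,v,w,x,y,z]$ and a unit $c\in\K^\times$. The degenerate case $r=0$ will need a separate remark, as explained at the end.

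Next, \Cref{Res su nonzero} also gives $\phi(P)=0$, that is, $P$ follows $\h$. Since $\phi$ is a ring homomorphism, $\phi(P)=c\,\phi(Q_1)\cdots\phi(Q_r)$. Because $R$ is a domain, some factor satisfies $\phi(Q_j)=0$. I set $F_0=Q_j$ and $E_0=c\prod_{i\neq j}Q_i$. Then $F_0$ is irreducible and follows $\h$ as a moving surface, and $P=E_0F_0$. Applying the same argument to $\Res_{su}(\x\B,\x\C)$ produces $E_1$ and $F_1$.

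The only real subtlety is the case where $P$ is a nonzero constant, so that no irreducible factor exists. This is excluded because $\phi(P)=0$ forces $P\notin\K^\times$: $\phi$ restricts to the identity on $\K$, so a nonzero constant could not map to $0$. Hence $r\ge 1$ and the factor $F_0$ exists. Beyond this check I expect no obstacle, since the corollary is essentially the observation that the prime ideal $\ker\phi$ contains an irreducible factor of any element it contains. One could add that $F_0$ involves $w,x,y,z$ nontrivially, because $\phi$ is injective on $\K[t,v]$, but the statement does not require this.
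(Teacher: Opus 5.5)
Your proof is correct and takes the same approach as the paper. The paper simply cites \Cref{Res su nonzero} together with the fact that a moving surface follows $\h$ if and only if one of its irreducible factors does; that fact is exactly your observation that the substitution $\x\mapsto\h$ has prime kernel because $R$ is a domain, and your check that the resultant is not a nonzero constant is a detail the paper leaves implicit.
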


\begin{proof}
As noted in \Cref{Res su nonzero}, both $\Res_{su}(\x\A,\x\C)$ and $\Res_{su}(\x\B,\x\C)$ are nonzero polynomials in $\K[t,v,w,x,y,z]$ following $\h$. As a moving surface follows a parameterization if and only if one of its irreducible factors does, the claim follows.
\end{proof}

We note that the additional factors $E_0$ and $E_1$ may further factor. It is not uncommon for the resultants above to be irreducible, in which case $E_0 =E_1=1$, however it is also possible that $E_0$, $E_1$ contain extraneous factors corresponding to basepoints of $\h$. In either case, we are primarily interested in the irreducible factors $F_0$, $F_1$ of $\Res_{su}(\x\A,\x\C)$ and $\Res_{su}(\x\B,\x\C)$ following $\h$. 

In general, removing unnecessary factors will be a common theme overall. Not only does this reduce the degrees of the polynomials involved, leading to smaller Sylvester matrices, but it also prevents unwanted vanishing. Indeed, recall that 
\begin{equation}\label{resultant product property}
\Res_{su}(p_1p_2, q) =\Res_{su}(p_1, q) \Res_{su}(p_2, q)   
\end{equation}
for polynomials $p_1$, $p_2$, and $q$. Hence removing factors from the polynomials involved reduces the number of factors of their resultant, one of which may potentially vanish. In particular, this is precisely the reason common factors were removed in \Cref{ABC notation} within the formulation of $\A$, $\B$, and $\C$.

\begin{prop}\label{Res tv and Implicit Equation}
With the polynomials $F_0,F_1 \in \K[t,v,w,x,y,z]$ in \Cref{Res su factors}, if $F_0\nsim F_1$ then their resultant $\Res_{tv} (F_0, F_1)$ is a nonzero polynomial in $\K[w,x,y,z]$. Moreover, this resultant factors as $\Res_{tv} (F_0, F_1) = EF$, for polynomials $E,F \in \K[w,x,y,z]$ where $F$ is the implicit equation of $X_\h$.
\end{prop}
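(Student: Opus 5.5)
Since $F_0$ and $F_1$ are irreducible and $F_0\nsim F_1$, they share no common factor in $\K[t,v,w,x,y,z]$. The plan has three steps: show nonvanishing of the resultant, show that it follows $\h$, and identify the irreducible factor that defines $X_\h$.

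\emph{Nonvanishing.} Viewed as bihomogeneous polynomials in $t,v$ with coefficients in $\K[w,x,y,z]$, $\Res_{tv}(F_0,F_1)=0$ would force a common factor of positive $t,v$-degree, which coprimality rules out. One degenerate case needs separate handling: if one of $F_0,F_1$ has $t,v$-degree zero, the resultant is a power of that polynomial and is still nonzero. I would also check that $F_0,F_1$ are homogeneous in $t,v$. This holds because the entries of $\A,\B,\C$ are bihomogeneous by \Cref{degrees ABC remark}, so $\Res_{su}(\x\A,\x\C)$ is homogeneous in $t,v$, and hence so are all of its factors. So $\Res_{tv}(F_0,F_1)$ is a nonzero element of $\K[w,x,y,z]$.

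\emph{Following $\h$.} Substituting $\x=\h$ gives $F_0(t,v,\h)=F_1(t,v,\h)=0$ in $R$, since both follow $\h$ by \Cref{Res su factors}. Because $\Res_{tv}$ is a polynomial in the coefficients, specialization commutes with it, up to the usual caveat about leading coefficients. The cleaner route is the identity $\Res_{tv}(F_0,F_1)=PF_0+QF_1$ with $P,Q\in\K[t,v,w,x,y,z]$. Evaluating at $\x=\h$ gives $\Res_{tv}(F_0,F_1)(\h)=0$. Thus $\Res_{tv}(F_0,F_1)$ is a nonzero polynomial in $\K[w,x,y,z]$ vanishing on the image of $\h$, hence on $X_\h$.

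\emph{Identifying $F$.} Since $\gcd\h=1$ and the basepoints are finite, $X_\h$ is an irreducible surface in $\P^3$ (the closure of the image of an irreducible variety), so its ideal is principal, generated by an irreducible implicit equation $F$. From $\Res_{tv}(F_0,F_1)(\h)=0$ we get $\Res_{tv}(F_0,F_1)\in I(X_\h)=(F)$. Writing $\Res_{tv}(F_0,F_1)=EF$ with $E\in\K[w,x,y,z]$ completes the statement.

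The main obstacle is the compatibility of the Sylvester determinant with the substitution $\x\mapsto\h$. The expression $PF_0+QF_1$ avoids this, but one should confirm it holds over the coefficient ring $\K[w,x,y,z]$ together with the variables $t,v$ in the homogeneous setting. The standard fact (as in \cite[ch. 3]{CLO}) is that the resultant lies in the ideal $(F_0,F_1)$. Here this can be applied after dehomogenizing, or via the adjugate of the Sylvester matrix, which gives $v^k\Res_{tv}(F_0,F_1)\in(F_0,F_1)$ for some $k$. After substitution this still gives $v^k\Res_{tv}(F_0,F_1)(\h)=0$ in the domain $R$, hence $\Res_{tv}(F_0,F_1)(\h)=0$.
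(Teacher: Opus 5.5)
Your proposal is correct and follows essentially the same route as the paper: the irreducible non-associates $F_0,F_1$ are coprime, so $\Res_{tv}(F_0,F_1)\neq 0$; this resultant follows $\h$; and so it is divisible by the implicit equation, the unique irreducible polynomial in $\K[w,x,y,z]$ following $\h$. You justify the middle step with the adjugate identity $v^k\Res_{tv}(F_0,F_1)\in(F_0,F_1)$, which is more careful than the paper, where this step is simply asserted; one small correction is that a finite base locus does not by itself make $X_\h$ two-dimensional, though the paper assumes $X_\h$ is a surface throughout, so your argument is unaffected.
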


\begin{proof}
Since $F_0$ and $F_1$ are irreducible, we note that $\gcd(F_0,F_1)=1$ if and only if $F_0\nsim F_1$, i.e. they do not differ by a unit multiple. In this case we have that $\Res_{tv} (F_0,F_1) \neq 0$ and, since both $F_0$ and $F_1$ follow $\h$, their resultant $\Res_{tv} (F_0,F_1)$ follows $\h$ as well. As such, there is some irreducible factor $F$ of $\Res_{tv} (F_0,F_1)$ following the parameterization $\h$. However, $F$ divides $\Res_{tv} (F_0,F_1)$ and is hence a polynomial in $\K[w,x,y,z]$, the coordinate ring of $\P^3$. The claim then follows as the only irreducible polynomial in $\K[w,x,y,z]$ and moving surface that follows $\h$ is the implicit equation of $X_\h$ itself.
\end{proof}

As noted, upon eliminating the variables $s,u,t,v$, any irreducible polynomial in $\K[w,x,y,z]$ that vanishes at $\h$ must be the implicit equation, since $X_\h$ is an irreducible surface in $\P^3$. In other words, the implicit equation is the moving surface following $\h$ that does not move! 

A potential obstruction to \Cref{Res tv and Implicit Equation} is that it is not clear when $F_0$ and $F_1$ are coprime, until computing them. The present author has been unable to produce any example of a translational surface $\h$ where these irreducible polynomials are associates of each other, and hence offers the following question as a potential inception for future research.

\begin{quest}\label{question F0 F1 coprime}
What conditions guarantee that $F_0\nsim F_1$ in \Cref{Res tv and Implicit Equation}? In other words, what condition ensures that $\Res_{su}(\x\A,\x\C)$ and $\Res_{su}(\x\B,\x\C)$ have distinct irreducible factors following $\h$?
\end{quest}

As noted in the previous section, there is a close connection to the surface $\h$ and its generating curves $\f$ and $\g$ in terms of syzygies and the resulting moving planes following $\h$. If $\Res_{tv}(F_0,F_1) =0$, there exists a point $(t_0,v_0) \in \P^1$ at which $F_0$ and $F_1$ vanish, after passing to the quotient field of $\K[w,x,y,z]$. It is not difficult to reduce to the case when $g_0(t_0,v_0)\neq 0$ (see e.g. the proof of \Cref{planar curve imp eqn theorem}), in which case $N_\g(t_0,v_0)$ is invertible. Noting that the columns of $N_\g \varphi_\f$ are multiples of the columns of $ [\A\,\B\,\C]$, one may reduce to the case of moving planes following the curve $\f$. As such, we ask a second question, an answer for which might lead to an answer to \Cref{question F0 F1 coprime}.

\begin{quest}\label{Curves res coprime}
Writing $\a$, $\b$, and $\c$ to denote the columns of $\varphi_\f$ as in (\ref{res of f}), is it possible for $\Res_{su}(\x\a,\x\c)$ and $\Res_{su}(\x\b,\x\c)$ to share a common factor? If so, what conditions on the curve $\f$ are sufficient to ensure that $\gcd(\Res_{su}(\x\a,\x\c), \Res_{su}(\x\b,\x\c)) =1$?
\end{quest}

We note that \Cref{Curves res coprime} has been addressed in \cite{HWJ10} for curves parameterized in low degree, however the author is unaware of a more general answer. 

We end this section with an example showing how \Cref{Res tv and Implicit Equation} may be employed to produce the implicit equation of $X_\h$. We borrow an example from \cite{GW18syz} and compare the effectiveness of our method with the algorithm developed there.

\begin{ex}[{\cite[3.6]{GW18syz}}]\label{Example used to compare 2 methods}
Consider the translational surface generated by the curves $\f:\, \P^1\dashrightarrow \P^3$ and $\g:\,\P^1\dashrightarrow \P^3$ given by
$$\f = (s^3,s^2u,su^2,u^3), \quad\quad\quad \g = (t^2,v^2,tv,tv).$$
The corresponding translational surface $\h:\,\P^1\times\P^1\dashrightarrow \P^3$ is given by 
$$\h=(2s^3t^2,\, s^3v+s^2ut^2,\,s^3tv+su^2t^2,\,s^3tv+u^3t^2).$$ 
With this, we compute
\[N_\g \varphi_\f = 
\left[
\begin{array}{cccc}
\tfrac{t^2}{2} & -\tfrac{v^2}{2} & -\tfrac{tv}{2} &-\tfrac{tv}{2}\\[0.5ex]
0&t^2&0&0\\
0&0&t^2&0\\
0&0&0&t^2
\end{array}
\right]
\left[
\begin{array}{ccc}
   u&0&0\\
   -s&u&0\\
   0&-s&u\\
   0&0&-s
\end{array}
\right]=\frac{1}{2}\left[
\begin{array}{ccc}
ut^2+sv^2 &stv-uv^2 &stv-utv\\
-2st^2&2ut^2&0\\
0&-2st^2&2ut^2\\
0&0&-2st^2
\end{array}
\right]_.
\]
Writing $\a$, $\b$, and $\c$ to denote the columns of $\varphi_\f$, we see that $\gcd(N_\g\a) = \gcd(N_\g\b) = 1$ and $\gcd(N_\g\c) = t$. Hence we may form the moving planes
\begin{align*}
\x\A &=ut^2w + sv^2w - 2st^2 x\\
\x\B &= stvw-uv^2w +2ut^2x-2st^2y \\
\x\C&= svw-uvw+2uty -2stz.
\end{align*}

Notice that, with respect to $s,u$, the polynomials above are linear, which also follows from \Cref{degrees ABC remark}. With this and \Cref{Res su nonzero}, we may formulate the resultants of these moving planes as the determinants of the $2\times 2$ Sylvester matrices
\begin{align*}
\Syl_{su}(\x\A,\x\C) &= \left[\begin{array}{cc}
v^2w-2t^2x &t^2w\\
vw-2tz &2ty-vw
\end{array}\right]\\[2ex]
\Syl_{su}(\x\B,\x\C) &= \left[\begin{array}{cc}
tvw -2t^2y& 2t^2x - v^2w  \\
vw-2tz &2ty-vw
\end{array}\right]_.
\end{align*}
Hence we have
\begin{align*}
\Res_{su}(\x\A,\x\C)&= -4t^3xy +2t^3wz - t^2vw^2  + 2t^2vwx+2tv^2wy - v^3w\\[1ex]
\Res_{su}(\x\B,\x\C)&= -4t^3y^2 +4t^3xz - 2t^2vwx+4t^2vwy-tv^2w^2-2tv^2wz+v^3w
\end{align*}
which are both irreducible polynomials. Thus $F_0=\Res_{su}(\x\A,\x\C)$ and $F_1=\Res_{su}(\x\B,\x\C)$ are the moving surfaces following $\h$ as in \Cref{Res su factors}, and are clearly not unit multiples of each other. Hence by \Cref{Res tv and Implicit Equation}, we have
\[
\Syl_{tv}(F_0,F_1) = \left[
\begin{array}{cccccc}
  2wz-4xy   & 2wx-w^2 &2wy &-w^2 &0&0 \\
  0& 2wz-4xy   & 2wx-w^2 &2wy &-w^2&0\\
  0&0&2wz-4xy   & 2wx-w^2 &2wy &-w^2\\
  4xz-4y^2 & 4wy-2wx &-w^2-2wz &w^2&0&0\\
  0& 4xz-4y^2 & 4wy-2wx &-w^2-2wz &w^2&0\\
  0&0& 4xz-4y^2 & 4wy-2wx &-w^2-2wz &w^2
\end{array}
\right]
\]
and its determinant is 
$$\Res_{tv}(F_0,F_1) = 8(y-z)w^5F$$
where 
\begin{align*}F =\,\,& 2w^4x^2-2w^3x^3-w^5y-w^4xy-12w^3x^2y+5w^4y^2+8w^3xy^2\\
&+12w^2x^2y^2-8w^2xy^3-20w^2y^4-24wxy^4+16y^6+w^5z+w^4xz\\
&+8w^3x^2z-8w^4yz-12w^3xyz+8w^3y^2z+48w^2xy^2z+24w^2y^3z\\
&+3w^4z^2-4w^3xz^2-14w^3yz^2-24w^2xyz^2-16wy^3z^2+6w^3z^3+4w^2z^4
\end{align*}
is the implicit equation of $X_\h$.
\end{ex}

Although $\Res_{su}(\x\A,\x\C)$ and $\Res_{su}(\x\B,\x\C)$ are irreducible in \Cref{Example used to compare 2 methods}, we note that it is common for the resultants involved to have extraneous factors. For instance, in the example above, $\Res_{tv}(F_0,F_1)$ has a factor of $w^5$ arising from basepoints (see discussion in \cite[3.4]{GW18syz} and its proof) and also the factor $y-z$ which appears precisely since the curve $\g$ lies on the plane $V(y-z)\subseteq\P^3$, which can be seen from its parameterization.

\begin{obs}\label{Comparison observation}
As noted, \Cref{Example used to compare 2 methods} coincides with the example given in \cite[3.6]{GW18syz}, and yields the implicit equation of $X_\h$ as well. However, we note that for the translational surface parameterized in bidegree $(3,2)$ above, the method of \cite{GW18syz} requires the determinant of a $12\times 12$ inhomogeneous Sylvester matrix. By comparison, the method presented here concerns the determinants of two $2\times 2$ matrices and a $6\times 6$ matrix, requiring less intermediate computations overall. Whereas the larger matrix of \cite[3.6]{GW18syz} is quite sparse, and so the difference in computation time is negligible, the disparity in size of the matrices involved between the two methods grows considerably for larger $m$ and $n$.
\end{obs}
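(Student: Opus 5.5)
The statement is a comparison of matrix sizes rather than an algebraic assertion, so the plan is to verify each size claimed in \Cref{Comparison observation} directly from the construction in \Cref{Example used to compare 2 methods}, and then to explain the growth claim using general degree bounds from \Cref{degrees ABC remark}.

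First, for the method of \cite{GW18syz}, I take the stated size of its inhomogeneous resultant, namely a $2mn\times 2mn$ matrix for bidegree $(m,n)$. With $(m,n)=(3,2)$ this gives $12\times 12$.

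Second, for the present method I compute the sizes of the Sylvester matrices from the degrees of the polynomials involved.
\begin{itemize}
\item By \Cref{degrees ABC remark}, $\x\A$, $\x\B$, $\x\C$ have $s,u$-degrees $\mu_1,\mu_2,\mu_3$.
\item Hence $\Syl_{su}(\x\A,\x\C)$ is $(\mu_1+\mu_3)\times(\mu_1+\mu_3)$, and $\Syl_{su}(\x\B,\x\C)$ is $(\mu_2+\mu_3)\times(\mu_2+\mu_3)$.
\item For the twisted cubic, $\varphi_\f$ has three linear columns, so $\mu_1=\mu_2=\mu_3=1$. Both $su$-Sylvester matrices are therefore $2\times 2$, as displayed.
\end{itemize}

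Third, I bound the $t,v$-degree of each $su$-resultant. Each row block of a Sylvester matrix has entries of fixed $t,v$-degree, so the determinant is bihomogeneous. The $t,v$-degree of $\Res_{su}(\x\A,\x\C)$ equals $\mu_3 d_\A+\mu_1 d_\C$, where $d_\A,d_\C\le n$ are the $t,v$-degrees of $\A$ and $\C$. In the example, $d_\A=d_\B=2$ and $d_\C=1$, since dividing by $\gcd(N_\g\c)=t$ lowers the degree of $\C$. This gives $t,v$-degree $3$ for both $F_0$ and $F_1$, so $\Syl_{tv}(F_0,F_1)$ is $6\times 6$, in agreement with the displayed matrix.

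Fourth, for the growth claim, the same bound applies in general. Since $\deg F_i\le n(\mu_i+\mu_3)\le 2n\mu_3$, the final Sylvester matrix has size at most $2n(\mu_1+\mu_2+2\mu_3)\le 4nm$. When the $\mu_i$ are balanced, near $m/3$, this is roughly $\tfrac{8}{3}nm$ in the worst case. The $su$-matrices, however, have size only $O(m)$, with polynomial entries of bounded degree. I will phrase the comparison as the one the observation intends: matrices of size $O(m)$, followed by one whose size is governed by degrees that typically shrink once the factors $E_0,E_1$ and the gcds are removed. This is set against the single dense-coefficient $2mn$ matrix of \cite{GW18syz}.

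The main obstacle is that the final matrix is not uniformly smaller than $2mn$ in the worst-case bound. The precise claim of ``less intermediate computation'' therefore rests on the removal of extraneous factors, which cannot be bounded in general. For that reason I will restrict the rigorous part of the argument to the degree and size computations above.
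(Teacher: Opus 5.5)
Your verification of the specific sizes is correct and matches what the paper relies on. With $(m,n)=(3,2)$ the $2mn\times 2mn$ matrix of \cite{GW18syz} is $12\times 12$. The splitting $\mu_1=\mu_2=\mu_3=1$ gives the two $2\times 2$ matrices. The degrees $d_\A=d_\B=2$, $d_\C=1$ give $t,v$-degree $3$ for both $su$-resultants, hence a $6\times 6$ matrix; this step uses that both resultants are irreducible in the example, so $F_i=\Res_{su}$. The paper offers no argument for the growth claim beyond this.

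Your general analysis, however, contains an arithmetic error that creates the ``obstacle'' you describe. The size of $\Syl_{tv}(F_0,F_1)$ is $\deg_{tv}F_0+\deg_{tv}F_1$, and your own degree bounds give
\[
\deg_{tv}F_0+\deg_{tv}F_1\le n(\mu_1+\mu_3)+n(\mu_2+\mu_3)=n(m+\mu_3).
\]
This is not $2n(\mu_1+\mu_2+2\mu_3)$: you doubled the sum. Since $\mu_3\le m$, the correct bound is at most $2mn$, so the final matrix is never larger than the one in \cite{GW18syz}. For balanced $\mu_i\approx m/3$ it is about $\tfrac{4}{3}mn$, not $\tfrac{8}{3}mn$. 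The bound reaches $2mn$ only when $\mu_1=\mu_2=0$. That is the ruled case of \Cref{ruled surface prop}, where $F_0=\x\A$, $F_1=\x\B$, and the final matrix is at most $2n\times 2n$.

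The two $su$-matrices have sizes $\mu_1+\mu_3$ and $\mu_2+\mu_3$, both at most $m$. The gap $2mn-n(m+\mu_3)=n(\mu_1+\mu_2)$ holds without any appeal to removing $E_0$, $E_1$, or the gcds. It is roughly $\tfrac{2}{3}mn$ for balanced splitting types, so it grows with both $m$ and $n$. This is exactly the support the last sentence of the observation needs, so your conclusion that the final matrix ``is not uniformly smaller than $2mn$'' should be replaced by this bound.

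Separately, the observation explicitly describes the matrix of \cite{GW18syz} as quite sparse. Calling it ``dense-coefficient'' contradicts the very statement you are proving.
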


As before, \Cref{Res tv and Implicit Equation} yields the implicit equation only if $\Res_{su}(\x\A,\x\C)$ and $\Res_{su}(\x\B,\x\C)$ have distinct factors following $\h$. In the proceeding section we show that, depending on the geometry of either curve, this criterion may be automatically satisfied, and the method developed here may be further simplified. In particular, for the surface $X_\h$ in \Cref{Example used to compare 2 methods}, the Sylvester matrices involved can be made to be considerably smaller, noting that the curve $\g$ is \textit{planar}, as it lies on the plane $V(y-z)\subseteq \P^3$.


\section{Planar curves and ruled surfaces}\label{Plane curves and ruled surfaces section}

As noted, the algorithm of the previous section produces the implicit equation of $X_\h$, provided that $\Res_{su}(\x\A,\x\C)$ and $\Res_{su}(\x\B,\x\C)$ have distinct factors following $\h$. In the current section, we show that this condition may be satisfied automatically in some cases, depending on how either of the generating curves embeds into $\P^3$. We consider the $\K$-vector space spanned by the homogeneous polynomials parameterizing one of the curves, say $\langle f_0,f_1,f_2,f_3\rangle$. In particular, the dimension of such a vector space will be a suitable invariant to distinguish between the cases.

\begin{rem}\label{dim remark}
With the assumptions of \Cref{main setting}, we have that $2\leq \dim_\K \f \leq 4$. Indeed, it is clear that $\dim_\K \f \leq 4$, and we note that $\dim_\K \f \neq 0,1$ as $f_0,f_1,f_2,f_3$ are not all identically zero and $\gcd \f =1$.
\end{rem}

Certainly one can repeat these observations for the vector space spanned by the polynomials of the curve $\g$. In particular, by symmetry, we may assume that $\dim_\K \f \leq \dim_\K \g$ throughout. Hence by \Cref{dim remark}, there is a natural way to divide \Cref{main setting} into cases, based on the dimension of the vector space spanned by $\f$. As such, we devote the rest of this section to the cases when $\dim_\K \f =2$ and $\dim_\K \f =3$, utilizing the geometry of the curves in this setting to show that the conditions of \Cref{Res tv and Implicit Equation} are automatically satisfied.

Notice that, since the polynomials of $\f$ are homogeneous, $\dim_\K \f$ coincides with the minimal number of generators of the ideal $(f_0,f_1,f_2,f_3)\subseteq \K[s,u]$. With this, $\dim_\K \f$ is the first Betti number appearing in a \textit{minimal} resolution of this ideal. Hence if $\dim_\K \f <4$, the resolution of $(\f)$ in \Cref{res f and g remark} must contain free summands, i.e. $\varphi_\f$ must contain unit entries. Following \Cref{syz lemma}, it will become apparent that $\h$ then has \textit{singly graded} syzygies, a phenomenon particularly useful in the implicitization of more general tensor product surfaces, as noted in \cite{DS16,SSV14,Weaver25}.

\subsection{Ruled surfaces} With the assumptions of \Cref{main setting}, we further assume that $\dim_\K \f =2$ throughout. In this case we show that $\f$ is a line in $\P^3$, hence $\h$ is a ruled translational surface, similar to the setting of \cite{GW18ruled}. We verify that the conditions of \Cref{Res tv and Implicit Equation} are satisfied, and that the method of \Cref{Iterated Resultant Section} can be simplified in this setting, and coincides with the implicitization method of \cite{GW18ruled} for this class of surfaces.

\begin{prop}\label{dim 2 line in P3}
With the assumptions of \Cref{main setting}, $\dim_\K \f=2$ if and only if $\f$ is a line in $\P^3$.  
\end{prop}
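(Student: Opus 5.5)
We argue both directions by translating between the linear span $V=\langle f_0,f_1,f_2,f_3\rangle\subseteq \K[s,u]_m$ and the linear span of the image of $\f$ in $\P^3$. The key observation is that a linear form $\ell=c_0w+c_1x+c_2y+c_3z\in S$ vanishes on the image of $\f$ if and only if $c_0f_0+c_1f_1+c_2f_2+c_3f_3=0$ in $\K[s,u]$. Hence the space of linear forms vanishing on the curve is the kernel of the surjection $\K^4\to V$, $(c_i)\mapsto \sum c_if_i$, and has dimension $4-\dim_\K\f$. In other words, $\dim_\K \f$ is the dimension of the linear span of the curve in $\K^4$, and the projective linear span of the image of $\f$ has dimension $\dim_\K\f-1$.

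For the forward direction, suppose $\dim_\K \f=2$, and choose a basis $p,q$ of $V$. Write $f_i=\alpha_ip+\beta_iq$ with $(\alpha_i),(\beta_i)\in\K^4$ linearly independent; these vectors are independent because the map $\K^4\to V$ is onto a $2$-dimensional space. Then $\f(s,u)=p(s,u)\,\boldsymbol\alpha+q(s,u)\,\boldsymbol\beta$, so the image lies on the line $L$ spanned by the points $\boldsymbol\alpha,\boldsymbol\beta$. It remains to check that the image is the whole line and not a point. Since $\gcd\f=1$, also $\gcd(p,q)=1$, because $p$ and $q$ are $\K$-combinations of the $f_i$ and conversely. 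Thus $[p:q]:\P^1\to\P^1$ is a nonconstant morphism, since $p,q$ are linearly independent of the same degree $m\ge 1$. It is therefore surjective, so the image of $\f$ is exactly $L$.

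For the converse, suppose the image of $\f$ is a line $L=V(\ell_1,\ell_2)$ with $\ell_1,\ell_2$ independent linear forms. By the observation above, these give two independent linear relations among $f_0,\dots,f_3$, so $\dim_\K\f\le 2$. By \Cref{dim remark}, $\dim_\K\f\ge2$, hence equality holds.

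The only delicate point is the interpretation of ``$\f$ is a line'': one must ensure that the image is all of $L$ rather than merely contained in it. This is handled by the coprimality of $p$ and $q$ together with the nonconstancy of $[p:q]$. The case $m=0$ is excluded automatically, since $\dim_\K\f=2$ forces $m\ge1$.
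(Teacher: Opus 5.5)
Your proof is correct, and it reaches the result by a more elementary route than the paper. The paper argues through the $\mu$-basis. It identifies $\dim_\K\f$ with the number of minimal generators of $(\f)$ and deduces that $\varphi_\f$ must have two columns with entries in $\K$, so the resolution has the shape (\ref{line in P3 res}). It then reads these constant syzygies $\a,\b$ as two distinct planes $V(\x\a),V(\x\b)$ containing the curve, and the converse runs the same correspondence backwards.

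You instead apply rank--nullity to the map $\K^4\to\langle f_0,\dots,f_3\rangle$. This identifies the linear relations among the $f_i$ with the linear forms vanishing on the curve directly, with no appeal to Hilbert--Burch or minimal resolutions. The underlying idea is the same in both proofs: degree-zero syzygies are exactly the planes containing $\f$. Your version, however, is self-contained. It also checks, via $\gcd(p,q)=1$ and the surjectivity of $[p:q]$, that the image is the whole line rather than merely contained in it, a point the paper leaves implicit.

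What the paper's route buys is the explicit shape of $\varphi_\f$ in (\ref{line in P3 res}): two constant columns $\a,\b$ and a third column $\c$ of degree $m$. This is exactly what \Cref{AB tv entries} and \Cref{ruled surface prop} use afterwards. To recover it from your argument you would need one more line. Since $\syz(\f)$ is graded free, the number of $\mu_i$ equal to $0$ is the dimension of its degree-zero part, which you computed to be $4-\dim_\K\f=2$. Hence $\mu_1=\mu_2=0$ and $\mu_3=m$.
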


\begin{proof}
From the prior discussion, $\dim_\K \f=2$ if and only if the ideal of $\f$ has two minimal generators. Namely, the syzygy matrix $\varphi_\f$ in the resolution (\ref{res of f}) has a submatrix of rank two consisting of unit entries. As the degrees of the entries of $\varphi_\f$ are constant within each column, this resolution of $\f$ is thus
\begin{equation}\label{line in P3 res}
    0\rightarrow \begin{array}{c}
 A(-m)^2\\
 \oplus \\
A(-2m)\\
\end{array}\overset{\varphi_\f}{\longrightarrow}A(-m)^4 \rightarrow (\f)\rightarrow 0,
\end{equation}
where $A=\K[s,u]$, by \Cref{res f and g remark}. With $\a$, $\b$, and $\c$ the ordered columns of $\varphi_\f$, notice that the entries of $\a$ and $\b$ are units in $\K$ and the entries of $\c$ have degree $m$.

Recall that $\syz(\f)$ is free, hence the syzygies of $\f$ are linearly independent. Since the syzygies $\a$ and $\b$ in (\ref{line in P3 res}) have entries in $\K$ and $\f\a = \f\b = 0$, the curve $\f$ lies on the intersection of the planes $V(\x\a)$ and $V(\x\b)$. As these are distinct planes, we have that $\f$ is a line in $\P^3$. Conversely, if $\f$ is a line in $\P^3$, then it must lie on the intersection of two distinct planes. Thus there exist two linearly independent syzygies of $\f$ with unit entries, hence one has the non-minimal resolution above, following \Cref{res f and g remark}.
\end{proof}

As $\f$ is a line in $\P^3$, the surface $X_\h$ obtained by translating $\f$ along $\g$ is called a \textit{ruled} translational surface. These surfaces were studied extensively in \cite{GW18ruled} and a method to compute the implicit equation of $X_\h$ from a parameterization was given when $m=1$. Notice that if $m=1$ in \Cref{main setting}, then $\dim_\K \f =2$ automatically, following \Cref{dim remark} and noting that $\f$ is a $\K$-subspace of $\big[ \K[s,u]\big]_1 = \K s\oplus \K u$.

\begin{cor}\label{AB tv entries}
The syzygies $\A$ and $\B$ of $\h$ are linearly independent with entries in $\K[t,v]$.   
\end{cor}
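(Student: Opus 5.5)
Linear independence of $\A$ and $\B$ over $R$ is already part of \Cref{A B C linearly independent}, so the real task is to show that their entries lie in $\K[t,v]$. The plan is to work in the setting of \Cref{dim 2 line in P3}. There, by the resolution (\ref{line in P3 res}), the first two columns $\a$ and $\b$ of $\varphi_\f$ have all entries in $\K$, since they are the degree-zero syzygies $\mu_1=\mu_2=0$. Indeed $\mu_1+\mu_2+\mu_3=m$ and $\mu_3=m$. Recall also the ordering convention $\mu_1\leq\mu_2\leq\mu_3$, so $\a,\b$ are precisely the constant columns.

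Next, compute $N_\g\a$ directly from (\ref{Nga description}). With $a_0,\dots,a_3\in\K$, every entry
\[
\tfrac{1}{2}(g_0a_0-g_1a_1-g_2a_2-g_3a_3),\quad g_0a_1,\quad g_0a_2,\quad g_0a_3
\]
is a $\K$-linear combination of $g_0,\dots,g_3$. It therefore lies in $\K[t,v]$, and the same holds for $N_\g\b$. By \Cref{gcds divide g0}(a), $\gcd(N_\g\a)$ and $\gcd(N_\g\b)$ are nonzero polynomials in $\K[t,v]$. Since $\K[t,v]$ is a factorization-closed subring of $R$, the quotients $\A=N_\g\a/\gcd(N_\g\a)$ and $\B=N_\g\b/\gcd(N_\g\b)$ also have entries in $\K[t,v]$. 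This also matches \Cref{degrees ABC remark}, since the $s,u$-degrees of these entries are $\mu_1=\mu_2=0$.

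The only point requiring care is that $N_\g\a$ and $N_\g\b$ are not zero, so that the gcds are defined. This follows from the invertibility argument already recorded in \Cref{A B C linearly independent}: $M_\g N_\g = g_0^2{\bf I}$ with $g_0\neq 0$, so $N_\g$ is injective, and $\a,\b$ are nonzero. No step here appears to be a genuine obstacle; the argument is essentially bookkeeping once the shape of $\varphi_\f$ from \Cref{dim 2 line in P3} is in hand.
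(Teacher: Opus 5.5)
Your proposal is correct and follows essentially the same route as the paper's proof. The paper also uses the fact that $\a$ and $\b$ have entries in $\K$ and $N_\g$ has entries in $\K[t,v]$, that $\gcd(N_\g\a)$ and $\gcd(N_\g\b)$ lie in $\K[t,v]$ by \Cref{gcds divide g0}, and that independence comes from \Cref{A B C linearly independent}. Your extra remarks on why $N_\g\a, N_\g\b \neq 0$ and why the quotients stay in $\K[t,v]$ only fill in details the paper leaves implicit.
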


\begin{proof}
This follows easily since $\a$ and $\b$ have of entries in $\K$ and the entries of $N_\g$ belong to $\K[t,v]$. Moreover, recall that $\gcd(N_\g \a)$ and $\gcd(N_\g \b)$ are polynomials in $\K[t,v]$ by \Cref{gcds divide g0}. Lastly, the independence follows from \Cref{A B C linearly independent}. 
\end{proof}

We now show that the conditions of \Cref{Res tv and Implicit Equation} are satisfied when $\dim_\K \f =2$, and the method of \Cref{Iterated Resultant Section} coincides with the main result of \cite[3.1]{GW18ruled} in this case.

\begin{prop}\label{ruled surface prop}
   With the assumptions of \Cref{main setting}, further assume that $\dim_\K \f =2$. The resultant $\Res_{tv}(\x\A,\x\B)$ is a nonzero polynomial in $\K[w,x,y,z]$ and factors as $\Res_{tv}(\x\A,\x\B)=EF$, where $F$ is the implicit equation of $X_\h$.
\end{prop}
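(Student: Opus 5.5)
By \Cref{AB tv entries}, $\x\A$ and $\x\B$ lie in $\K[t,v,w,x,y,z]$, are homogeneous in $t,v$, and are linear in $w,x,y,z$. So $\Res_{tv}(\x\A,\x\B)$ is an element of $\K[w,x,y,z]$. The argument has three steps: show the resultant is defined and nonzero, show it follows $\h$, and then extract the implicit equation as in \Cref{Res tv and Implicit Equation}.

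\textbf{Nonvanishing.} By \Cref{xA xB xC irreducible and coprime}, $\x\A$ and $\x\B$ are irreducible and coprime, so they share no common factor in $\K[t,v,w,x,y,z]$. We must also check that the Sylvester matrix is not degenerate, i.e. that at least one of the two polynomials has positive degree in $t,v$. Suppose instead that both have degree $0$. Then $\A$ and $\B$ would be constant syzygies of $\h$, so $X_\h$ would lie on the line $V(\x\A)\cap V(\x\B)$, contradicting that $X_\h$ is a surface. Alternatively, by (\ref{Nga description}) the last three entries of $N_\g\a$ are $g_0a_i$. These can be constant after dividing by $\gcd(N_\g\a)$ only if $\gcd(N_\g\a)\sim g_0$, and \Cref{gcds divide g0}(b) limits how often this can happen. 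I expect the geometric argument to be cleaner.

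There is a second subtlety. The homogeneous resultant must be taken with respect to the formal degrees of $\x\A$ and $\x\B$ in $t,v$. Coprimality over the UFD $\K[w,x,y,z]$ then gives nonvanishing, provided that not both leading coefficients vanish, i.e. that there is no common root at infinity. Since we work homogeneously in $\P^1$, this is automatic: a nonzero resultant is equivalent to having no common factor. This is the point I would state most carefully.

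\textbf{Following $\h$.} Since $\h\A=\h\B=0$, substituting $\x=\h$ gives $\Res_{tv}(\h\A,\h\B)=0$. The cleaner way to see this is to note that the Sylvester matrix evaluated at $\h$ annihilates the coefficient vector of the monomials in $t,v$, exactly as in \Cref{Res su nonzero}. One caveat: $\h$ involves $t,v$, so the substitution must be justified. I would argue pointwise instead. For a general $p=(s_0,u_0,t_0,v_0)$ with $g_0(t_0,v_0)\neq0$, the two polynomials $\x\A(t,v)$ and $\x\B(t,v)$, evaluated at $\x=\h(p)$, share the common root $(t_0,v_0)$. Hence the resultant vanishes at $\h(p)$. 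This holds on a dense subset of $X_\h$, so the resultant vanishes on all of $X_\h$.

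\textbf{Conclusion.} A nonzero polynomial in $\K[w,x,y,z]$ vanishing on $X_\h$ has an irreducible factor $F$ vanishing on $X_\h$, and $F$ must be the implicit equation because $X_\h$ is irreducible. The main obstacle is the careful justification of nonvanishing, namely positivity of the $t,v$-degree together with coprimality. Following $\h$ and the factorization are routine.
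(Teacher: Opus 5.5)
Your proof is correct and is essentially the paper's argument. The paper observes that $\Res_{su}(\x\A,\x\C)=(\x\A)^m$ and $\Res_{su}(\x\B,\x\C)=(\x\B)^m$, so $F_0=\x\A$ and $F_1=\x\B$, and then invokes \Cref{Res tv and Implicit Equation} with the coprimality from \Cref{xA xB xC irreducible and coprime}; that proposition's proof is exactly the nonvanishing, follows-$\h$ and irreducible-factor reasoning you run directly on $\x\A,\x\B$. Your monomial-vector justification of the substitution $\x=\h$ and your check that the Sylvester matrix is nonempty are more careful than the paper's treatment.

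Keep the geometric argument for positive $t,v$-degree and drop the alternative. \Cref{gcds divide g0}(b) does not exclude $\gcd(N_\g\a)\sim\gcd(N_\g\b)\sim g_0$, since that is compatible with (b) when $\gcd(N_\g\c)=1$.
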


\begin{proof}
Recall from \Cref{AB tv entries} that $\A$ and $\B$ are syzygies of $\h$ with entries in $\K[t,v]$, hence the moving planes $\x\A$ and $\x\B$ are polynomials in $\K[t,v,w,x,y,z]$. From \Cref{degrees ABC remark} and (\ref{line in P3 res}), we see that $\x\C$ has degree $m$ with respect to $s,u$, and both $\x\A$ and $\x\B$ have degree $0$ with respect to $s,u$. Thus from the construction of the Sylvester matrix (\ref{Sylvester matrix definition}), it follows that $\Res_{su}(\x\A,\x\C) = (\x\A)^m$ and $\Res_{su}(\x\B,\x\C) = (\x\B)^m$. Hence the irreducible factors of these resultants following $\h$ in \Cref{Res su factors} are precisely $F_0 = \x\A$ and $F_1=\x\B$. The result now follows from \Cref{Res tv and Implicit Equation}, noting that $\x\A$ and $\x\B$ are coprime by \Cref{xA xB xC irreducible and coprime}.
\end{proof}

In particular, \Cref{ruled surface prop} coincides with the method of \cite[3.1]{GW18ruled} to obtain the implicit equation of a ruled translational surface $X_\h$, again noting that $\dim_\K \f = 2$ automatically if $m=1$.

\subsection{Planar curves}
With the case of ruled translational surfaces when $\dim_\K \f =2$ settled, we now consider the case when $\dim_\K \f =3$. As such, $\f$ is a \textit{planar} curve in $\P^3$, following a similar argument to the proof of \Cref{dim 2 line in P3}. This setting is of particular interest for applications in geometric modeling as one commonly considers translational surfaces generated by curves $\f$ and $\g$ lying in orthogonal planes \cite{XXR12}.

\begin{prop}\label{special syzs planar curve}
With the assumptions of \Cref{main setting}, if $\dim_\K \f=3$ then the curve $\f$ lies on a plane in $\P^3$. 
\end{prop}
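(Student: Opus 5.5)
The plan is to mirror the proof of \Cref{dim 2 line in P3}: produce a syzygy of $\f$ with constant entries and read it as a plane containing the curve.

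First, since $\dim_\K \f=3$, the four forms $f_0,f_1,f_2,f_3$ of degree $m$ are $\K$-linearly dependent. So there are scalars $(c_0,c_1,c_2,c_3)\in\K^4$, not all zero, with $c_0f_0+c_1f_1+c_2f_2+c_3f_3=0$. The vector $\a_0=(c_0,c_1,c_2,c_3)^T$ is then a nonzero syzygy of $\f$ of degree $0$. Equivalently, the ideal $(\f)$ has only three minimal generators, so the resolution in \Cref{res f and g remark} is not minimal and $\varphi_\f$ has a unit entry. Since the degrees of entries are constant along columns, some column, necessarily $\a$ with $\mu_1=0$, has all entries in $\K$. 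It is enough to use the direct linear-dependence argument, since that already gives the constant syzygy.

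Next, consider the linear form $\x\a_0=c_0w+c_1x+c_2y+c_3z\in\K[w,x,y,z]$. It is nonzero, so $V(\x\a_0)$ is a plane in $\P^3$. For every $(s_0,u_0)$ at which $\f$ is defined, we have $\f(s_0,u_0)\a_0=0$. Hence each point of the curve lies on $V(\x\a_0)$. Since the curve is the closure of these image points and the plane is closed, the whole curve lies in this plane.

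There is no real obstacle here. The only point to address is that $\dim_\K\f=3<4$ forces a nontrivial constant dependence, which is immediate. One might also remark that $\dim_\K\f=3$ rules out a second, independent constant syzygy. Otherwise $\dim_\K\f\le 2$, and by \Cref{dim 2 line in P3} the curve would be a line. So the plane is unique and $\f$ is a genuinely planar, non-linear curve, although the statement itself does not require this.
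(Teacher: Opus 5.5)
Your proof is correct and is essentially the paper's argument. The paper gets the degree-zero syzygy by noting that $(\f)$ has only three minimal generators, so the resolution (\ref{res of f}) is non-minimal with $\mu_1=0$ and the first column $\a$ of $\varphi_\f$ has constant entries; it then reads off the plane $V(\x\a)$ exactly as you do. Your direct linear-dependence argument is a slightly more elementary way to produce the same constant syzygy, whereas the paper's phrasing also records (as you note in passing) that this syzygy is the $\mu$-basis column $\a$, which is what \Cref{planar curve imp eqn theorem} later uses.
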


\begin{proof}
We proceed as in the proof \Cref{dim 2 line in P3}. As $\dim_\K \f = 3$, the ideal of $\f$ is minimally generated by $3$ elements. Hence the syzygy matrix in the free resolution (\ref{res of f}), with respect to the generating set $f_0,f_1,f_2,f_3$, must have a single column with unit entries. Hence $\mu_1=0$ and this non-minimal resolution of $\f$ is
\begin{equation}\label{planar curve res}
0\rightarrow \begin{array}{c}
 A(-m)\\
 \oplus \\
A(-m-\mu_2)\\
 \oplus \\
A(-m-\mu_3)\\
\end{array}\overset{\varphi_\f}{\longrightarrow}A(-m)^4 \rightarrow (\f)\rightarrow 0,
\end{equation}
where $A=\K[s,u]$ and $\mu_2+\mu_3 =m$. With $\a$, $\b$, and $\c$ the ordered columns of $\varphi_\f$, the entries of $\a$ are units in $\K$ and the entries of $\b$ and $\c$ have degree $\mu_2$ and $\mu_3$ respectively. Since $\a$ has entries in $\K$ and $\f\a = 0$, it follows that the curve $\f$ lies on the plane $V(\x\a)\subseteq\P^3$.
\end{proof}

As noted in \Cref{AB tv entries}, since the syzygy $\a$ of $\f$ has entries in $\K$, the syzygy $\A$ of $\h$ has entries in $\K[t,v]$. Hence we may proceed as in the proof of \Cref{ruled surface prop} for the resultants involving the moving plane $\x\A$.

\begin{thm}\label{planar curve imp eqn theorem}
With the assumptions of \Cref{main setting}, assume that $\dim_\K \f =3$. Writing $F_1$ to denote the factor of $\Res_{su}(\x\B,\x\C)$ following $\h$ in \Cref{Res su factors}, the resultant $\Res_{tv}(\x\A, F_1)$ is a nonzero polynomial in $\K[w,x,y,z]$ and factors as $\Res_{tv}(\x\A, F_1)=EF$, where $F$ is the implicit equation of $X_\h$.
\end{thm}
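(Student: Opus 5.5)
The plan is to follow the pattern of \Cref{ruled surface prop} and reduce everything to \Cref{Res tv and Implicit Equation}. Since $\mu_1=0$ by (\ref{planar curve res}), the moving plane $\x\A$ has degree $0$ in $s,u$ and, as in \Cref{AB tv entries}, lies in $\K[t,v,w,x,y,z]$. From the construction of the Sylvester matrix (\ref{Sylvester matrix definition}), $\Res_{su}(\x\A,\x\C)=(\x\A)^{\mu_3}$ with $\mu_3\ge 1$. Since $\x\A$ is irreducible by \Cref{xA xB xC irreducible and coprime} and follows $\h$, the factor $F_0$ of \Cref{Res su factors} is $\x\A$ itself. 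So $\Res_{tv}(\x\A,F_1)=\Res_{tv}(F_0,F_1)$. By \Cref{Res tv and Implicit Equation}, it then suffices to show $F_1\nsim \x\A$; everything else (nonvanishing, landing in $\K[w,x,y,z]$, the factorization $EF$) comes from that proposition.

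The main obstacle is proving $F_1\nsim\x\A$. I would argue by contradiction, using the geometry of \Cref{Intersection xA and xB and xC}. Suppose $F_1=\lambda\,\x\A$ with $\lambda\in\K^*$. Choose $(t_0,v_0)\in\P^1$ with $g_0(t_0,v_0)\neq 0$, such that the specialization $\x\A(t_0,v_0)$ is a nonzero linear form in $w,x,y,z$. Choose it also so that $\Res_{su}(\x\B,\x\C)$ specializes to a nonzero polynomial, whose irreducible factors include $F_1(t_0,v_0)$. Since there are finitely many bad $(t_0,v_0)$, this is a generic choice.

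Consider the plane $P=V(\x\A(t_0,v_0))\subseteq\P^3$. For any point $q\in P$, $F_1(t_0,v_0)(q)=0$, so $\Res_{su}(\x\B(t_0,v_0),\x\C(t_0,v_0))(q)=0$. Specializing the resultant commutes here, since the leading coefficients can be arranged to be nonzero, or one can use the homogeneous resultant, which is compatible with specialization. Hence there is $(s_0,u_0)$ with $\x\B(p)(q)=\x\C(p)(q)=0$ for $p=(s_0,u_0,t_0,v_0)$. Because $q\in P$ and $\A(p)=\A(t_0,v_0)$ does not depend on $s,u$, the three planes $\x\A(p),\x\B(p),\x\C(p)$ all pass through $q$. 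Since $g_0(t_0,v_0)\neq0$, \Cref{Intersection xA and xB and xC} forces $q=\h(p)$. Thus every point of the plane $P$ equals $\h(s_0,u_0,t_0,v_0)$ for some $(s_0,u_0)\in\P^1$, i.e.\ $P$ is contained in the image of the curve $(s,u)\mapsto\h(s,u,t_0,v_0)=g_0(t_0,v_0)\f+f_0\g(t_0,v_0)$. That image has dimension at most one, which contradicts the fact that $P$ is two-dimensional.

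Hence $F_1\nsim\x\A$, and the theorem follows from \Cref{Res tv and Implicit Equation}. The delicate points I would check are the genericity choices of $(t_0,v_0)$ and the compatibility of $\Res_{su}$ with specializing $t,v$. For the latter I would use the degree bounds in \Cref{degrees ABC remark}. If a leading coefficient drops, the homogeneous resultant still vanishes exactly when there is a common zero in $\P^1$, provided not both leading coefficients vanish; this again holds for generic $(t_0,v_0)$.
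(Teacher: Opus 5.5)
Your proof is correct and follows the paper's skeleton. You get $F_0=\x\A$ because $\Res_{su}(\x\A,\x\C)=(\x\A)^{\mu_3}$. You then suppose $\x\A$ divides $\Res_{su}(\x\B,\x\C)$, fix $(t_0,v_0)$ with $g_0(t_0,v_0)\neq 0$, and apply \Cref{Intersection xA and xB and xC} at every point of the plane $V(\x\A(t_0,v_0))$. You differ from the paper in two places.

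First, the paper reaches a suitable $(t_0,v_0)$ by a detour: it analyses the case $g_0(t_0,v_0)=0$ through basepoints and points at infinity and appeals to \cite{SG17}. You observe instead that $\A$ does not involve $s,u$, so the whole plane lies over every $(t_0,v_0)$. Hence any point off the finitely many zeros of $g_0$ works.

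Second, and more importantly, the paper keeps only the fact that each point of the plane lies on $X_\h$. It deduces that $X_\h$ is a plane and then asserts that this forces both $\f$ and $\g$ to be lines. You keep the sharper information that the point equals $\h(s_0,u_0,t_0,v_0)$ with $(t_0,v_0)$ fixed. So the plane would lie inside the image of the single curve $(s,u)\mapsto\h(s,u,t_0,v_0)$, which is a pure dimension contradiction. This is a genuine gain, because the paper's final assertion is not correct as stated. For $\f=(s^2,su,u^2,0)$ and $\g=(t,v,0,0)$ one has $\dim_\K\f=3$ and $\x\A=z$, and $X_\h=V(z)$ is a plane. Your argument therefore covers a case the paper's reasoning does not. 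It also uses $\dim_\K\f=3$ only through $\mu_1=0$.

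A few of your side remarks can be dropped.
\begin{itemize}
\item Leading coefficients need no attention. The homogeneous Sylvester determinant of two binary forms over an algebraically closed field vanishes exactly when they have a common zero in $\P^1$. If both leading coefficients vanish, the forms share the zero $(1,0)$ and the first column of the Sylvester matrix is zero. In any case, genericity of $(t_0,v_0)$ could not control those coefficients for all $q\in P$.
\item The condition $\x\A(t_0,v_0)\neq 0$ is automatic, since the entries of $\A$ are coprime binary forms in $t,v$.
\item You need only the divisibility $\x\A\mid\Res_{su}(\x\B,\x\C)$, not anything about the factorization of its specialization.
\item The identification $q=\h(p)$ comes from the proof of \Cref{Intersection xA and xB and xC}, not its statement, which only says the point lies on $X_\h$. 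It also follows directly: $\h(p)\neq 0$ lies on all three planes, and their intersection is a single point.
\end{itemize}
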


\begin{proof}

As noted in the proof of \Cref{special syzs planar curve}, the syzygy $\a$ in (\ref{planar curve res}) consists of entries in $\K$. As such, the syzygy $\A$ of $\h$ has entries in $\K[t,v]$. Following \Cref{degrees ABC remark}, we thus have that $\Res_{su}(\x\A,\x\C) = (\x\A)^{\mu_3}$ where $\mu_3\geq 1$ is as in (\ref{planar curve res}), and so $F_0 = \x\A$ is the only irreducible factor of this resultant following $\h$. Thus the claim will follow from \Cref{Res tv and Implicit Equation} once it has been shown that $\gcd (\x\A, F_1) =1$ for $F_1$ in \Cref{Res su factors}. Suppose otherwise, and assume that $\x\A$ divides $\Res_{su}(\x\B,\x\C)$.

Notice that for any point $(t_0,v_0,w_0,x_0,y_0,z_0)$ on $V(\x\A)\subseteq \P^1\times \P^3$, since $\x\A$ divides $\Res_{su}(\x\B,\x\C)$, this resultant vanishes at this point as well. Hence there exists a point $(s_0,u_0)\in \P^1$ such that both $\x\B$ and $\x\C$ vanish at $(s_0,u_0,t_0,v_0,w_0,x_0,y_0,z_0)$. Since $\x\A$ is a polynomial in $\K[t,v,w,x,y,z]$, we note that it also vanishes at this point in $\P^1\times\P^1\times\P^3$.

We claim that we may assume that $g_0(t_0,v_0) \neq 0$ for such a point above. As we need only show that such a point on $V(\x\A)$ with these coordinates and this condition exists, suppose to the contrary that $g_0(t_0,v_0) =0$ for all points $(t_0,v_0,w_0,x_0,y_0,z_0)\in V(\x\A)$. As $N_\g\a$, $N_\g\b$, and $N_\g\c$ are multiples of $\A$, $\B$, and $\C$, and the moving planes $\x\A$, $\x\B$, and $\x\C$ vanish at the point above, following (\ref{Nga description}) we have the following system
\begin{equation}\label{system of eqns}
\arraycolsep=1.4pt
\left\{
\begin{array}{lcc}
(g_0a_0 -g_1a_1-g_2a_2-g_3a_3)w_0&=&0  \\[1ex]
(g_0b_0 -g_1b_1-g_2b_2-g_3b_3)w_0&=&0  \\[1ex]
(g_0c_0 -g_1c_1-g_2c_2-g_3c_3)w_0&=&0 
\end{array}
\right.
\end{equation}
when evaluated at $(s_0,u_0,t_0,v_0)$, where the coefficients are the entries of $\a$, $\b$, and $\c$. 

With (\ref{system of eqns}) above, notice that if $w_0 =0$, then $(w_0,x_0,y_0,z_0)$ is a point at infinity in $\P^3$. If $w_0\neq 0$ then, since $g_0(t_0,v_0)=0$ by assumption, we have the following matrix equation
\begin{equation}\label{matrix eqn}
\left[\begin{array}{ccc}
     a_1&a_2&a_3  \\[1ex]
     b_1&b_2&b_3  \\[1ex]
     c_1&c_2&c_3  
\end{array}\right]_{(s_0,u_0)}\left[\begin{array}{c}
     g_1  \\[1ex]
     g_2 \\[1ex]
     g_3 
\end{array}\right]_{(t_0,v_0)} =0.
\end{equation}
Since $g_0(t_0,v_0) =0$ and $\g$ is a curve, and hence has no basepoints as $\gcd \g =1$, it follows that at least one of $g_1$, $g_2$, $g_3$ is non-vanishing at $(t_0,v_0)$. As such, it follows that the determinant of the $3\times 3$ matrix in (\ref{matrix eqn}) must vanish. However, by \Cref{res f and g remark}, this determinant is precisely $f_0(s_0,u_0)$. Since both $f_0(s_0,u_0) =0$ and $g_0(t_0,v_0) =0$, by \Cref{basepoint lemma} we have that $(s_0,u_0,t_0,v_0)$ is a basepoint of $\h$.

From the discussion above, it follows that if $g_0(t_0,v_0)=0$ for every point $(t_0,v_0,w_0,x_0,y_0,z_0)\in V(\x\A)$, every such point on $V(\x\A)$ corresponds either to a basepoint of $\h$ or a point at infinity in $\P^3$. However, this is impossible as $\x\A$ divides $\Res_{su}(\x\B,\x\C)$ and follows $\h$, and so it cannot be an extraneous factor of this resultant arising from basepoints or anomalies at infinity \cite{SG17}. As such, we may assume that $g_0(t_0,v_0) \neq 0$ for the point $(t_0,v_0,w_0,x_0,y_0,z_0)\in V(\x\A)$ above, as such a point must exist.

With this, fix $(t_0,v_0)$ and consider the plane $V\big(\x\A(t_0,v_0)\big)\subseteq \P^3$, recalling that $\A$ has entries in $\K[t,v]$. Since $\x\A$ divides $\Res_{su}(\x\B,\x\C)$, repeating the previous argument shows that for any point $(w_0,x_0,y_0,z_0)\in V\big(\x\A(t_0,v_0)\big)$, there exists a point $(s_0,u_0)$ such that $\x\A$, $\x\B$, and $\x\C$ simultaneously vanish at the point $(s_0,u_0,t_0,v_0,w_0,x_0,y_0,z_0)$ in $\P^1\times\P^1\times \P^3$. Since $g_0(t_0,v_0) \neq 0$, by \Cref{Intersection xA and xB and xC} it follows that the point $(w_0,x_0,y_0,z_0)$ lies on the surface $X_\h$. In particular, it follows that \textit{every} point on the plane $V\big(\x\A(t_0,v_0)\big)\subseteq \P^3$ must lie on the surface $X_\h$. However, this is only possible if $V\big(\x\A(t_0,v_0)\big)$ is the surface $X_\h$ itself, and so $\x\A(t_0,v_0)$ is the implicit equation. However, $\h$ parameterizes a translational surface generated by curves $\f$ and $\g$, and $X_\h=V\big(\x\A(t_0,v_0)\big)$ is a plane in $\P^3$. This is only possible if both $\f$ and $\g$ are lines in $\P^3$, which is a contradiction by \Cref{dim 2 line in P3}, as $\dim_\K\f =3$. 
\end{proof}

We note that one could have assumed instead that $\dim_\K \f \leq 3$ in an effort to combine \Cref{ruled surface prop} and \Cref{planar curve imp eqn theorem}. In this case, the proof above proceeds similarly, and one has that both $\f$ and $\g$ must be lines in $\P^3$, hence $\dim_\K \f =2$ by \Cref{dim 2 line in P3}. However, the contradiction then follows from the proof of \Cref{ruled surface prop} as $\Res_{su}(\x\B,\x\C) = (\x\B)^m$, and so it is impossible for $\x\A$ to divide $\Res_{su}(\x\B,\x\C)$ following \Cref{xA xB xC irreducible and coprime}.

\begin{rem}
Notice that the proof of \Cref{planar curve imp eqn theorem} shows that $\x\A$ does not divide $\Res_{su}(\x\B,\x\C)$. Since $\x\A$ is irreducible by \Cref{xA xB xC irreducible and coprime}, one could alternatively compute $\Res_{tv}(\x\A,\Res_{su}(\x\B,\x\C))$ as this is hence nonzero, and so the implicit equation $F$ must be among its factors. However, it is simpler to remove unnecessary factors of $\Res_{su}(\x\B,\x\C)$ first, following the resultant property in (\ref{resultant product property}). 
\end{rem}

\begin{rem}
As noted prior to \Cref{AB tv entries}, we have that $\dim_\K \f =2$ automatically if $m=1$. Similarly, it follows that $\dim_\K \f \leq 3$ if $m =2$, as then $\f$ is a $\K$-subspace of $\big[ \K[s,u]\big]_2 = \K s^2\oplus \K su \oplus \K u^2$. In particular, \Cref{ruled surface prop} and \Cref{planar curve imp eqn theorem} may be employed to compute the implicit equation for any translational surface $X_\h$ parameterized in bidegree $(m,n)$ when either $m\leq 2$ or $n\leq 2$, by symmetry.
\end{rem}

To demonstrate the effectiveness of \Cref{planar curve imp eqn theorem}, we consider a translational surface $X_\h$ with parameterization $\h:\,\P^1\times\P^1\dashrightarrow\P^3$ that has a basepoint as in \Cref{bad basepoint}. As such, the method of \cite[3.4]{GW18syz} cannot be applied. However, we show that \Cref{planar curve imp eqn theorem} succeeds in producing the implicit equation, requiring relatively small matrices in its computations.

\begin{ex}[{\cite[3.7]{GW18syz}}]\label{Planar curve example}
Consider the translational surface generated by the curves $\f:\, \P^1\dashrightarrow \P^3$ and $\g:\,\P^1\dashrightarrow \P^3$ given by
$$\f = (s^3,0,s^2u,u^3), \quad\quad\quad \g = (t^2,tv,0,-v^2).$$
The corresponding translational surface $\h:\,\P^1\times\P^1\dashrightarrow \P^3$ is given by 
$$\h= (2s^3t^2, s^3tv, s^2ut^2, -s^3v^2+u^3t^2).$$
As noted in \cite[3.7]{GW18syz}, the parameterization $\h$ has only one basepoint $p=(1,0,1,0) \in \P^1\times \P^1$ with multiplicity $9$. Additionally, since $\f(1,0) = (1,0,0,0) =-\g(1,0)$, we see that $p$ is a basepoint as in \Cref{bad basepoint}, and so the implicitization method of \cite[3.4]{GW18syz} fails. However, we show that \Cref{planar curve imp eqn theorem} does succeed in producing the implicit equation of $\h$.

We note that $\f$ is a planar curve as it lies on the plane $V(x)\subseteq \P^3$, which can be seen from its parameterization. Computing $N_\g\varphi_\f$, we have
\[
N_\g \varphi_\f = 
\left[
\begin{array}{cccc}
   \frac{t^2}{2} &-\frac{tv}{2}&0&\frac{v^2}{2}\\[0.5ex]
   0&t^2&0&0\\
   0&0&t^2&0\\
   0&0&0&t^2
\end{array}
\right]
\left[
\begin{array}{ccc}
   0&-u&0\\
   1&0&0\\
   0&s&-u^2\\
   0&0&s^2
\end{array}
\right]=
\frac{1}{2}
\left[
\begin{array}{ccc}
  -tv&-ut^2&s^2v^2\\
  2t^2&0&0\\
  0&2st^2&-2u^2t^2\\
  0&0&2s^2t^2
\end{array}
\right]_.
\]
Writing $\a$, $\b$, and $\c$ to denote the columns of $\varphi_\f$ above, we see that $\gcd(N_\g\a) = t$, $\gcd(N_\g\b) = t^2$, and $\gcd(N_\g\c) = 1$. Hence after dividing each column by the greatest common divisor of its entries, we may form the moving planes
\begin{align*}
\x\A &=-vw+2tx\\
\x\B &= -uw+2sy \\
\x\C&= s^2v^2w-2u^2t^2y + 2s^2t^2z.
\end{align*}

From the equations above, and also following \Cref{degrees ABC remark}, we note that $\x\B$ is a polynomial of degree $1$, with respect to $s,u$. Similarly, $\x\C$ is a polynomial of degree $2$, with respect to $s,u$. Hence their Sylvester matrix is the $3\times 3$ matrix 
\[
\Syl_{su}(\x\B,\x\C) = \left[
\begin{array}{ccc}
   2y  &-w&0  \\
    0 & 2y&-w\\
    v^2w +2t^2z &0&-2t^2y
\end{array}
\right]_.
\]
Thus we have 
$$\Res_{su}(\x\B,\x\C) = \det \Syl_{su}(\x\B,\x\C)= 2t^2w^2z-8t^2y^3 +v^2w^3$$
which is irreducible. With this, we may compute the second Sylvester matrix, with respect to $t,v$, noting that $\x\A$ is linear in $t,v$ and the polynomial above has degree $2$ with respect to $t,v$. Hence we obtain the $3\times 3$ Sylvester matrix
\[
\Syl_{tv}(\x\A,\Res_{su}(\x\B,\x\C)) = \left[
\begin{array}{ccc}
2x&-w&0\\
0&2x&-w\\
2w^2z-8y^3&0&w^3
\end{array}
\right]
\]
and so
$$\Res_{tv}(\x\A,\Res_{su}(\x\B,\x\C))= \det \Syl_{tv}(\x\A,\Res_{su}(\x\B,\x\C)) =2w^2(2wx^2-4y^3+w^2z).$$
Analyzing the factors of this resultant, it follows that the implicit equation of the translational surface $X_\h$ is $F= 2wx^2-4y^3+w^2z$.
\end{ex}

We note that the curve $\g$ in \Cref{Planar curve example} is planar as well, as it lies in the plane $V(y) \subseteq \P^3$. Hence one could alternatively compute $N_\f \varphi_\g$ and proceed as above. In this case, one encounters a $2\times 2$ Sylvester matrix with respect to $t,v$, and then a $4\times 4$ Sylvester matrix with respect to $s,u$, to obtain the implicit equation. As mentioned, this setting of translational surfaces constructed from curves lying in orthogonal planes is common within geometric modeling \cite{XXR12}, hence \Cref{planar curve imp eqn theorem} provides a simple solution to the implicitization problem in a common setting for translational surfaces.

\section*{Acknowledgements}

The use of \texttt{Macaulay2} \cite{Macaulay2} was helpful in the preparation of this article, offering numerous examples to support the results presented here.



\begin{thebibliography}{99}


\bibitem{AHW05} W.~A. Adkins, J.~W. Hoffman, and H. Wang \textit{Equations of parametric surfaces with base points via syzygies}, J. Symbolic Comput. \textbf{39} (2005),  73--101. 


\bibitem{BPF26}  R. M. Benedicto, S. P\'erez-D\'iaz, and M. Fern\'andez de Sevilla, \textit{Implicitization of 3{D} rational surfaces using iterated univariate resultants and avoiding extraneous factors}, Rev. R. Acad. Cienc. Exactas F\'is. Nat. Ser. A Mat. RACSAM \textbf{120} (2026), Paper No. 22, 20.


\bibitem{Botbol11}  N. Botbol, \textit{The implicit equation of a multigraded hypersurface}, J. Algebra \textbf{348} (2011), 381--401.











\bibitem{BM09} L. Bus\'e and B. Mourrain, \textit{Explicit factors of some iterated resultants and discriminants}, Math. Comp. \textbf{78} (2009),  345--386.







 






\bibitem{Cox08} D. Cox, \textit{The moving curve ideal and the Rees algebra},  Theoret. Comput. Sci. \textbf{392} (2008), 23--36. 


\bibitem{CGZ00} D. Cox, R. Goldman, M. Zhang, \textit{On the
validity of implicitization by moving quadrics for rational surfaces
with no basepoints}, J. Symbolic Comput. \textbf{29} (2000), 419--440.


\bibitem{CLO} D. Cox, J. Little, and D. O'shea, \textit{Using algebraic geometry}, Graduate Texts in Mathematics \textbf{185}, Springer, New York, 2005. 


\bibitem{DS16} E. Duarte and H. Schenck, \textit{Tensor product surfaces and linear syzygies}, Proc. Amer. Math. Soc. \textbf{144} (2016), 65--72.



\bibitem{Eisenbud} D.~Eisenbud, \textit{Commutative algebra: with a view toward algebraic geometry}, Graduate Texts in Mathematics \textbf{150}, Springer-Verlag, New York, 1995. 

\bibitem{GKZ} I.M. Gelfand, M.M. Kapranov, and A.V. Zelevinsky, \textit{Discriminants, resultants, and multidimensional determinants}, Mathematics: Theory \& Applications, Birkh\"auser, Boston, 1994. 


\bibitem{GW18syz} R. Goldman and H. Wang, \textit{Syzygies for translational surfaces}, J. Symbolic Comput. \textbf{89} (2018), 73--93.


\bibitem{GW18ruled} R. Goldman and H. Wang, \textit{Implicitizing ruled translational surfaces}, Comput. Aided Geom. Des. \textbf{59} (2018), 98--106.

  

\bibitem{Macaulay2} D. R. Grayson and M. E. Stillman, Macaulay2, a software system for research in algebraic geometry. Available at http://www.math.uiuc.edu/Macaulay2/






\bibitem{HWJ10} J. Hoffman, H. Wang, and X. Jia, \textit{Minimal generators for the {R}ees algebra of rational space curves of type {$(1,1,d-2)$}}, Eur. J. Pure Appl. Math. \textbf{3} (2010), 602--632.



\bibitem{Liu99} H. Liu, \textit{Translation surfaces with constant mean curvature in {$3$}-dimensional spaces}, J. Geom. \textbf{64} (1999), 141--149.

\bibitem{LV17} M. L\'avi\v cka and J. Vr\v sek, \textit{Translation surfaces and isotropic transport nets on rational minimal surfaces}, Mathematical methods for curves and surfaces, Lecture Notes in Comput. Sci. \textbf{10521} (2017), 186--201.
  

\bibitem{LM09} D. Lazard and S. McCallum, \textit{Iterated discriminants}, J. Symbolic Comput. \textbf{44} (2009), 1176--1193.

\bibitem{McCallum99} S. McCallum, \textit{Factors of iterated resultants and discriminants}, J. Symbolic Comput. \textbf{27} (1999), 367--385.


\bibitem{PDS20} S. P\'erez-D\'iaz and L. Shen, \textit{Parameterization of rational translational surfaces}, Theoret. Comput. Sci. \textbf{835} (2020), 156--167.
  

\bibitem{SSV14} H. Schenck, A. Seceleanu, and J. Validashti, \textit{Syzygies and singularities of tensor product surfaces of bidegree $(2,1)$}, Math. Comp. \textbf{83} (2014), 1337--1372.


\bibitem{SC95} T. W. Sederberg and F. Chen, \textit{Implicitization
using moving curves and surfaces}, Proceedings of
SIGGRAPH (1995), 301--308.

\bibitem{SGD97} T. W. Sederberg, R. N. Goldman, and H. Du, \textit{Implicitizing rational curves by the method of moving algebraic
curves}, J. Symb. Comput. \textbf{23} (1997), 153--175.

\bibitem{SSQK94} T. W. Sederberg, T. Saito,
D. Qi, and K. S. Klimaszewksi, \textit{Curve implicitization using moving lines}, Comput. Aided Geom. Des. \textbf{11} (1994),
687--706.


\bibitem{SG17} L. Shen and R. Goldman, \textit{Strong {$\mu$}-bases for rational tensor product surfaces and extraneous factors associated to bad base points and anomalies at infinity}, SIAM J. Appl. Algebra Geom. \textbf{1} (2017), 328--351.


\bibitem{SG18} L. Shen and R. Goldman, \textit{Combining complementary methods for implicitizing rational tensor product surfaces}, Comput.-Aided Des. \textbf{104} (2018), 100--112.

\bibitem{XXR12} X. Shi, X. Wang, and R. Goldman, \textit{Using {$\mu$}-bases to implicitize rational surfaces with a pair of orthogonal directrices}, Comput. Aided Geom. Design \textbf{29} (2012), 541--554.

\bibitem{Weaver25} M. Weaver, \textit{Tensor product surfaces and quadratic syzygies}, Linear Algebra Appl. \textbf{720} (2025), 350--371.


 
\end{thebibliography}
\end{document}